\theoremstyle{plain}
\newtheorem{theorem}{Theorem}[section]
\newtheorem{lemma}[theorem]{Lemma}
\newtheorem{definition}[theorem]{Definition}
\newtheorem{example}[theorem]{Example}
\theoremstyle{remark}
\newtheorem{remark}{Remark}
\begin{document}
\begin{frontmatter}
\title{Onsager-Machlup functional for stochastic differential equations
	with time-varying noise}
\runtitle{OM functional for SDEs with time-varying noise}

\begin{aug}
	\author[A]{\inits{F.}\fnms{Xinze}~\snm{Zhang}\ead[label=e1]{zhangxz24@mails.jlu.edu.cn}},
	\author[A,B]{\inits{S.}\fnms{Yong}~\snm{Li}\ead[label=e2]{liyong@jlu.edu.cn}}\thanks{corresponding author.}.
	\address[A]{School of Mathematics, Jilin University, ChangChun, People's Republic of China}
	
	\address[B]{School of Mathematics and Statistics, Center for Mathematics and Interdisciplinary Sciences,\\    ~~~ Northeast Normal University, ChangChun, People's Republic of China\\
		\printead{e1,e2}}
	
\end{aug}

\begin{abstract}
This paper is devoted to studying the Onsager-Machlup functional for stochastic differential equations with time-varying noise of the $\alpha$-Hölder, ${0 < \alpha < \frac{1}{4}}$,
\begin{equation}
	\begin{aligned}
		\mathrm{d} X_t = f(t, X_t)\mathrm{d} t + g(t)\mathrm{d}W_t.
	\end{aligned}\notag
\end{equation}
Our study focuses on scenarios where the diffusion coefficient \(g(t)\) exhibits temporal variability, starkly contrasting the conventional assumption of a constant diffusion coefficient in the existing literature. This variance brings some complexity to the analysis. Through this investigation, we derive the Onsager-Machlup functional, which acts as the Lagrangian for mapping the most probable transition path between metastable states in stochastic processes affected by time-varying noise.  This is done by introducing new measurable norms and applying an appropriate version of the Girsanov transformation. To illustrate our theoretical advancements, we provide numerical simulations, including cases of a one-dimensional SDE and a fast-slow SDE system, which demonstrate the application to multiscale stochastic volatility models, thereby highlighting the significant impact of time-varying diffusion coefficients.
\end{abstract}

\begin{keyword}[class=MSC]
\kwd[Primary ]{82C35}
\kwd{60H10}
\kwd[; secondary ]{37H10}
\end{keyword}

\begin{keyword}
\kwd{Onsager-Machlup functional}
\kwd{time-varying noise}
\kwd{measurable norm}
\kwd{Girsanov transformation}
\end{keyword}

\end{frontmatter}
\section*{Statements and Declarations}
\begin{itemize}
	\item Ethical approval\\
	Not applicable.
	
	\item Competing interests\\
	The authors have no competing interests as defined by IOP PUBLISHING LTD, or other interests that might be perceived to influence the results and/or discussion reported in this paper.
	
	\item Authors' contributions\\
	X. Zhang wrote the main manuscript text and prepared figures 1-3. Y. Li provided ideas and explored specific research methods. All authors reviewed the manuscript.
	
	\item Funding\\
	The second author was supported by National Natural Science Foundation of China (Grant No. 12071175)
	
	\item Availability of data and materials\\
	Not applicable.
\end{itemize}
\section{Introduction}
In the field of stochastic dynamical systems, it is crucial to understand how systems transition between metastable states. This involves studying Stochastic Differential Equations (SDEs) to identify deterministic and significant quantities such as solution paths, mean exit time, and escape probability. An essential tool in this context is the Onsager-Machlup function, which serves as a deterministic quantity used to characterize the most probable transition paths between metastable states in a system. 

The concept of the Onsager-Machlup function was initially proposed by Onsager and Machlup \cite{1,2} in 1953, where they defined it as the probability density functional for a diffusion process characterized by linear drift and constant diffusion coefficients. Following this foundational work, Tizsa and Manning \cite{3} extended the application scope of the Onsager-Machlup function to nonlinear equations in 1957.  In parallel, Stratonovich \cite{4} introduced a rigorous mathematical approach in the same year, significantly enriching the theoretical underpinnings of the Onsager-Machlup function.

The Onsager-Machlup functional for SDEs driven by Brownian motion has been the subject of extensive research, as documented in numerous studies \cite{5,6,7,8,9,10}. In recent years, Moret and Nualart \cite{12} studied the Onsager-Machlup functional of SDEs driven by fractional Brownian motion. Here, hurst index $H$ is divided into two situations: $\frac{1}{4} < H < \frac{1}{2}$ and $\frac{1}{2} < H < 1$; Chao and Duan \cite{14} derived the Onsager-Machlup function for a class of stochastic dynamical systems under (non-Gaussian) Lévy noise as well as (Gaussian) Brownian noise, and examined the corresponding most probable paths; Li and Li \cite{11} demonstrated that the $\Gamma$-limit of the Onstage-Machlup functional on the space of curves is the geometric form of the Freidlin-Wentzell functional in a proper time scale; Du et al. \cite{22} proved that as time approaches infinity, an unbounded OM functional minimum sequence contains a convergent subsequence in the curve space, and the graph limit of this minimum subsequence is the extremum of the action functional. And based on this, a geometric minimization algorithm for energy climb is proposed; Liu et al. \cite{15} obtained the Onsager-Machlup functional for McKean-Vlasov SDEs in a class of norms that dominate $L^2([0,1],\mathbb{R}^d)$. Carfagnini and Wang \cite{25} proved that the Loewner energy can be interpreted as an Onsager-Machlup functional for the SLE\(_\kappa\) loop measure for any fixed \( \kappa \in (0, 4] \).

Nevertheless, there has been few research on SDEs where the diffusion coefficients are not constants but vary over time. Bardina et al. \cite{13} explored additive noise in SDEs, involving the diffusion term coefficient $B$, where $B$ is a non-negative bounded linear operator but not depending on time $t$. Coulibaly-Pasquier \cite{23} extended the results of Capitaine \cite{24} and computed the Onsager-Machlup functional for a non-homogeneous elliptic diffusion process on a Riemannian manifold. Specifically, on a Riemannian manifold equipped with a metric \( g = (\sigma \sigma^*) \), Coulibaly-Pasquier proved that let \( X_t(x_0) \) be an \( L_t \) diffusion process starting at point \( x_0 \), where \( L_t = \frac{1}{2} \Delta_t + Z(t, \cdot) \). The corresponding Onsager-Machlup functional is given by:
\[
\begin{aligned}
	OM(t,\varphi, \dot{\varphi}) = \int_{0}^{1} \| Z(t, \varphi) - \dot{\varphi} \|_{g(t)}^2 +  \operatorname{div}_{g(t)}(Z(t, \varphi)) - \frac{1}{6} R_{g(t)}(\varphi) + \frac{1}{2} \operatorname{trace}_{g(t)}(\dot{g}(t)) \,{\rm d}t,
\end{aligned}
\]
where \( \operatorname{div}_{g(t)} \), \( R_{g(t)} \), and \( \operatorname{trace}_{g(t)} \) denote the divergence operator, the scalar curvature, and the trace of a matrix with respect to the metric \( g(t) \), respectively.

It is important to note that \( \sigma \) represents the diffusion induced by the Riemannian structure. As Capitaine states in Section 4 of \cite{24}, "When \( \sigma \) is the identity matrix, the Riemannian structure induced by the diffusion is the Euclidean one." In other words, as shown in Theorem 2 of \cite{24}, in the case of Euclidean space, \( \sigma \) is the identity matrix, which corresponds to the following stochastic differential equation:
\[
\begin{aligned}
	\mathrm{d} X_t = b(t, X_t)\, \mathrm{d} t + \mathrm{d} W_t.
\end{aligned}
\]
Therefore, the results are not directly applicable to time-varying stochastic differential equations in Euclidean space. This observation significantly motivates our exploration of the Onsager-Machlup functionals for SDEs with time-varying noise in \( \mathbb{R}^n \), described by:
\begin{equation}\label{1.1}
	\begin{aligned}
		\mathrm{d} X_t = f(t, X_t)\, \mathrm{d} t + g(t)\, \mathrm{d} W_t,
	\end{aligned}
\end{equation}
where \( f \in C^2_b\left([0, 1] \times \mathbb{R}^n, \mathbb{R}^n \right) \) and \( g \in C^2_b\left([0, 1], \mathbb{R}^n \right) \), with \( C^2_b\left(A, B\right) \) denoting the space of second-derivative continuous bounded mappings from \( A \) to \( B \), and \( W_t \) being an \( n \)-dimensional Brownian motion. Under the \( \alpha \)-Hölder norm, the Onsager-Machlup functional for Equation \(\eqref{1.1}\) is given by:
\[
OM(\varphi, \dot{\varphi}) = \int_{0}^{1} \left| g(t)^{-1} \cdot \left(\dot{\varphi}_t - f(t, \varphi_t)\right) \right|^2 \,{\rm d}t + \int_{0}^{1} \operatorname{div}^{g}_{x} f(t, \varphi_t) \,{\rm d}t,
\]
where our metric is independent of the diffusion, which represents a fundamental difference from the paper \cite{23}, where the detailed derivation can be found in Theorem 4.3.

This paper is devoted to deriving the Onsager-Machlup functional for SDEs driven by time-varying noise, covering both $1$-dimensional and $n$-dimensional cases. The 1-dimensional scenario, due to its favorable properties, enables us to achieve superior results under a wider range of conditions. Conversely, the analysis in the higher-dimensional context demands careful matrix analysis and computations, necessitating stricter conditions on the coefficients to derive meaningful outcomes. Ultimately, we successfully derive the Onsager-Machlup function, maintaining its interpretation as a Lagrangian function. This achievement enables us to delineate the most probable transition paths surrounding smooth trajectories within diffusion processes.

The challenge of this study lies in the diffusion coefficient $g$ in SDEs being tied to the time variable $t$. In the $n$-dimensional case, we draw inspiration from \cite{12}, define a new norm, and prove its measurability. To apply the Girsanov transformation, it is essential to ensure that $g(t)$ is an invertible $n \times n$ matrix for all $0 \leq  t \leq 1$. Subsequent analysis and computations leverage techniques such as Taylor expansion, matrix trace, and martingales. Through extensive estimation, we ultimately achieve the desired functional.

The structure of the paper is as follows. In Section 2, we collect some known notions and facts. Namely we give the definitions of measurable norm and Onsager-Machlup functional, and introduce several important technical lemmas and theorems. In Section 3, we study the Onsager-Machlup functional for $1$-dimensional SDEs. In Section 4, we further study the Onsager-Machlup functional for $n$-dimensional SDEs. In Section 5, we test some specific examples to illustrate our results.

\section{Preliminaries}

\subsection{Approximate limits in Wiener space}
In this section, we recall some fundamental definitions and results concerning approximate limits in Wiener space. Specifically, we focus on the measurable semi-norm, which pertains to the exponentials of random variables in the first and second Wiener chaos (reference \cite{16}).

Let $W = \left\{ W_t, ~t \in [0, 1] \right\}$ be a Brownian motion (Wiener process) defined in the complete filtered probability space $(\Omega, \mathcal{F}, \left\{ \mathcal{F}_t \right\}_{t \geq 0}, \mathbb{P})$. Here, $\Omega$ represents the space of continuous functions vanishing at zero, and $\mathbb{P}$ denotes the Wiener measure. Let $H := L^2([0,1], \mathbb{R}^n)$ be a Hilbert space and $H^1$ be the Cameron-Martin space defined as follows:
\begin{displaymath}
	H^1 := \left\{ f : [0, 1] \to \mathbb{R}^n \in H^1 ~\big|~f(0) = 0, f ~ \text{is absolutely continuous functions and} ~ f^{\prime} \in H \right\}.
\end{displaymath}
The scalar product in $H^1$ is defined as follows:
\begin{displaymath}
	\langle f, g \rangle_{H^1} = \langle f^{\prime}, g^{\prime} \rangle_{H}
\end{displaymath}
for all $f, g \in H^1$.
Let $\mathcal{P}:H^1 \to H^1$ be an orthogonal projection with $dim \mathcal{P}H^1 < \infty$ and the specific expression
\begin{displaymath}
	\mathcal{P}h = \sum_{i = 1}^{n} \langle h_i, f \rangle h_i,
\end{displaymath}
where $(h_1, ..., h_n)$ is a set of orthonormal basis in $\mathcal{P}H^1$. In addition, we can also define the
$H^1$-valued random variable
\begin{displaymath}
	\mathcal{P}^W = \sum_{i = 1}^{n} \bigg( \int_{0}^{1} {h_i^{\prime}} \,{\rm d}W_s \bigg) h_i,
\end{displaymath}
where $\mathcal{P}^W$ does not depend on $(h_1, ..., h_n)$.
\begin{definition}\label{definition 2.1}
	We say that a sequence of orthogonal projections $\mathcal{P}_n$ on $H^1$ is an approximating sequence of projections, if $dim \mathcal{P}_n H^1 < \infty$ and $\mathcal{P}_n$ converges strongly to the identity operator $I$ in $H^1$ as $n \to \infty$.
\end{definition}

\begin{definition}\label{definition 2.2}
	We say that a semi-norm $\mathcal{N}$ on $H^1$ is measurable, if there exists a random variable $\tilde{\mathcal{N}}$, satisfying $\tilde{\mathcal{N}} < \infty $ a.s, such that for any approximating sequence of projections $\mathcal{P}_n$ on $H^1$, the sequence $\mathcal{N}(\mathcal{P}^W_n)$ converges to $\tilde{\mathcal{N}}$ in probability and $\mathbb{P}(\tilde{\mathcal{N}} \leq \epsilon) > 0$ for any $\epsilon > 0$. Moreover, if $\mathcal{N}$ is a norm on $H^1$, then we call it a measurable norm.
\end{definition}

For proving the measurability of the semi-norm defined in this paper, it is necessary to introduce the following lemma (see \cite{17}).
\begin{lemma}\label{lemma 2.2}
	Let $\mathcal{N}_n$ be a nondecreasing sequence of measurable semi-norms. Suppose that $\tilde{\mathcal{N}} := \mathbb{P}\text{-}\!\lim\limits_{n \to \infty} \tilde{\mathcal{N}}_n$ exists and $\mathbb{P}(\tilde{\mathcal{N}} \leq \epsilon) > 0$ for any $\epsilon > 0$. In addition, if the limit $\lim\limits_{n \to \infty} \mathcal{N}_n$ exists on $H^1$, then $\mathcal{N} := \lim\limits_{n \to \infty} \mathcal{N}_n$ is a measurable semi-norm.
\end{lemma}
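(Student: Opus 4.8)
\noindent\emph{Proof strategy.} The plan is to verify directly that the random variable $\tilde{\mathcal{N}} := \mathbb{P}\text{-}\!\lim_{n\to\infty}\tilde{\mathcal{N}}_n$ supplied by the hypothesis is precisely the object required by Definition \ref{definition 2.2} for the limiting semi-norm $\mathcal{N}=\lim_{n}\mathcal{N}_n=\sup_n\mathcal{N}_n$ (that $\mathcal{N}$ is again a semi-norm on $H^1$ is immediate, since homogeneity and the triangle inequality pass to the finite monotone pointwise limit). Concretely one must check: (i) $\tilde{\mathcal{N}}<\infty$ a.s.; (ii) $\mathbb{P}(\tilde{\mathcal{N}}\le\epsilon)>0$ for every $\epsilon>0$; and (iii) $\mathcal{N}(\mathcal{P}^W_k)\to\tilde{\mathcal{N}}$ in probability for every approximating sequence of projections $\{\mathcal{P}_k\}$ on $H^1$ in the sense of Definition \ref{definition 2.1}. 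Item (ii) is assumed, and (i) will drop out of the first step.

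First I would show that $\{\tilde{\mathcal{N}}_n\}$ is a.s. nondecreasing. Fixing any approximating sequence $\{\mathcal{P}_k\}$, the inequality $\mathcal{N}_n\le\mathcal{N}_{n+1}$ on $H^1$ gives $\mathcal{N}_n(\mathcal{P}^W_k)\le\mathcal{N}_{n+1}(\mathcal{P}^W_k)$ for every $k$; letting $k\to\infty$ and passing to a common a.s.-convergent subsequence (each $\mathcal{N}_n$ being measurable) yields $\tilde{\mathcal{N}}_n\le\tilde{\mathcal{N}}_{n+1}$ a.s. Hence $\tilde{\mathcal{N}}_n\uparrow\sup_n\tilde{\mathcal{N}}_n$ a.s., and this monotone a.s. limit necessarily coincides with the limit in probability, so it equals $\tilde{\mathcal{N}}$; in particular $\tilde{\mathcal{N}}<\infty$ a.s., which is (i). The payoff of this monotonicity is that for any $\epsilon,\eta>0$ there is $N$ with $\mathbb{P}(\tilde{\mathcal{N}}-\tilde{\mathcal{N}}_N>\epsilon)<\eta$.

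For (iii) fix $\{\mathcal{P}_k\}$. The lower bound is routine: for each $n$ we have $\mathcal{N}(\mathcal{P}^W_k)\ge\mathcal{N}_n(\mathcal{P}^W_k)\xrightarrow{\mathbb{P}}\tilde{\mathcal{N}}_n$, so $\mathbb{P}(\mathcal{N}(\mathcal{P}^W_k)<\tilde{\mathcal{N}}_n-\epsilon)\to0$, and combining with $\tilde{\mathcal{N}}_n\uparrow\tilde{\mathcal{N}}$ gives $\mathbb{P}(\mathcal{N}(\mathcal{P}^W_k)<\tilde{\mathcal{N}}-\epsilon)\to0$ for every $\epsilon>0$. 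The heart of the proof, and what I expect to be the main obstacle, is the matching upper bound $\mathbb{P}(\mathcal{N}(\mathcal{P}^W_k)>\tilde{\mathcal{N}}+\epsilon)\to0$; by the previous paragraph this reduces to the uniform-in-$k$ tail estimate that for all $\epsilon,\eta>0$ there are $N,K$ with $\mathbb{P}\big(\mathcal{N}(\mathcal{P}^W_k)-\mathcal{N}_N(\mathcal{P}^W_k)>\epsilon\big)<\eta$ for $k\ge K$. The obstruction is twofold: $\mathcal{N}(h)-\mathcal{N}_N(h)=\sup_{m\ge N}(\mathcal{N}_m(h)-\mathcal{N}_N(h))$ involves a supremum over $m$ that cannot be interchanged naively with the limit in $k$, and $\|\mathcal{P}^W_k\|_{H^1}$ diverges, so a measurable semi-norm is genuinely not dominated by $\|\cdot\|_{H^1}$ on $\mathcal{P}_kH^1$.

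My approach to this step would be to work on the finite-dimensional subspaces $V_k:=\mathcal{P}_kH^1$ (note $\dim V_k\to\infty$), on which every $\mathcal{N}_m$ is continuous — a semi-norm on a finite-dimensional space is automatically continuous — and increases pointwise to the semi-norm $\mathcal{N}$, which is finite on $V_k\subset H^1$ by hypothesis and hence continuous there as well; Dini's theorem then gives $\delta_{k,m}:=\sup\{\mathcal{N}(h)-\mathcal{N}_m(h):h\in V_k,\ \|h\|_{H^1}=1\}\to0$ as $m\to\infty$ for each fixed $k$, whence by homogeneity $\mathcal{N}(\mathcal{P}^W_k)\le\mathcal{N}_m(\mathcal{P}^W_k)+\delta_{k,m}\|\mathcal{P}^W_k\|_{H^1}$ for all $m$. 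Since $\|\mathcal{P}^W_k\|_{H^1}^2$ is a chi-squared variable with $\dim V_k$ degrees of freedom, $\|\mathcal{P}^W_k\|_{H^1}/\sqrt{\dim V_k}\to1$ in probability, so choosing $m_k$ large enough that $\delta_{k,m_k}\sqrt{\dim V_k}$ is negligible reduces the task to bounding $\mathcal{N}_{m_k}(\mathcal{P}^W_k)$ by $\tilde{\mathcal{N}}+\epsilon$ with high probability. Carrying this out requires a delicate diagonal choice of $m_k$ that is at once large enough for the Dini bound and slow enough that the convergence $\mathcal{N}_m(\mathcal{P}^W_k)\to\tilde{\mathcal{N}}_m$ (as $k\to\infty$) together with $\tilde{\mathcal{N}}_m\le\tilde{\mathcal{N}}$ can still be exploited — it is precisely here that the two hypotheses ``$\lim_n\mathcal{N}_n$ exists on $H^1$'' and ``$\tilde{\mathcal{N}}<\infty$ a.s.'' are used essentially, and where quantitative integrability of the measurable semi-norms $\mathcal{N}_n$ (not merely their finiteness) must enter. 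Once the lower and upper bounds are established, together with (i) and (ii), Definition \ref{definition 2.2} is met and $\mathcal{N}$ is a measurable semi-norm.
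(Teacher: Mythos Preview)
The paper does not supply its own proof of this lemma: it is stated with an explicit citation to reference \cite{17} and is used as a black box in the proof of Lemma \ref{lemma 2.8}. So there is no in-paper argument to compare against.

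As for your proposal itself, the set-up and the lower half of item (iii) are fine, but the upper-bound step contains a genuine gap that you flag yourself and do not close. Dini's theorem on the unit ball of $V_k$ gives $\delta_{k,m}\to 0$ as $m\to\infty$ for each \emph{fixed} $k$, with no rate whatsoever; meanwhile $\|\mathcal{P}^W_k\|_{H^1}\asymp\sqrt{\dim V_k}\to\infty$. Nothing in the hypotheses controls how fast $\delta_{k,m}$ decays as a function of $k$, so there is no mechanism to make $\delta_{k,m_k}\sqrt{\dim V_k}\to 0$. Worse, once you let $m_k$ depend on $k$, the convergence $\mathcal{N}_{m_k}(\mathcal{P}^W_k)\to\tilde{\mathcal{N}}_{m_k}$ is no longer available: the measurability of $\mathcal{N}_m$ only gives convergence in $k$ for \emph{fixed} $m$, and the differences $\mathcal{N}_m-\mathcal{N}_N$ are not themselves semi-norms, so you cannot bootstrap uniformity from the definition. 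Your closing sentence that ``quantitative integrability \dots must enter'' is an admission that an extra ingredient is needed, but none is supplied by the statement, so as written the argument does not go through.

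The proof in the cited reference avoids the blow-up of $\|\mathcal{P}^W_k\|_{H^1}$ altogether: rather than comparing $\mathcal{N}$ and $\mathcal{N}_N$ on $V_k$ via a norm bound, one exploits that on Gaussian measures finite-dimensional projections are \emph{contractive} for measurable semi-norms (an Anderson--inequality/symmetry argument), which yields a stochastic upper bound $\mathcal{N}_m(\mathcal{P}_k^W)\le\tilde{\mathcal{N}}_m$ in distribution uniformly in $k$. That uniformity is exactly what lets one interchange the $\sup_m$ with the limit in $k$ and is the missing idea in your outline.
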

\begin{definition}\label{definition 2.4}
	Let $f$ be a function defined on $\Omega$. For $0 < \alpha < 1$, we introduce Hölder norm ($\alpha$-Hölder)
	\begin{displaymath}
		\Vert f \Vert_{\alpha; \Omega} = \Vert f \Vert_{0; \Omega} + \left[ f \right]_{\alpha; \Omega},
	\end{displaymath}
	where $\Vert f \Vert_{0; \Omega}$ represents the supremum norm of $f$ on $\Omega$, and $\left [f \right]_{\alpha; \Omega}$ represents the Hölder semi-norm of $f$ on $\Omega$. The specific expression is as follows:
	\begin{displaymath}
		\Vert f \Vert_{0; \Omega}  = \sup\limits_{x \in \Omega}\vert f(x) \vert, \quad
		\left[ f \right]_{\alpha; \Omega} = \sup\limits_{x, y \in \Omega, x \neq y} \frac{\left| f(x) - f(y) \right|}{\vert x - y \vert^{\alpha}}.
	\end{displaymath}
\end{definition}
Throughout this paper, unless otherwise stated, the norm \( \Vert \cdot \Vert \) denotes the Hölder norm \( \Vert \cdot \Vert_{\alpha} \). We define the space
\[
H^1_{x_0} := \{ f(t) \mid f(t) - x(0) \in H^1_0 \}.
\]
Using the aforementioned norm, we induce the uniform topology on \( H^1_{x_0} \), resulting in the Borel \( \sigma \)-field \( \mathcal{B}_{x_0} \) on \( H^1_{x_0} \).

A subset \( I_n \) of \( H^1_{x_0} \) is said to have the form
\[
I_n = \{ x \in H^1_{x_0} \mid (x(t_1), \ldots, x(t_n)) \in E \},
\]
where \( s < t_1 < \cdots < t_n \leq u \), and \( E \) is a Borel set in \( \mathbb{R}^n \). Such a set is called an \( n \)-dimensional cylinder set. The collection of all \( n \)-dimensional cylinder sets forms a \( \sigma \)-field, and the class of all finite-dimensional cylinder sets forms a field, denoted by \( I \). It is shown in \cite{28} that the \( \sigma \)-field \( \sigma(I) \) generated by \( I \) is precisely the Borel \( \sigma \)-field \( \mathcal{B}_{x_0} \), i.e.,
\[
\sigma(I) = \mathcal{B}_{x_0}.
\]

Next, we define the measure \( \mu_x \) on \( \mathcal{B}_{x_0} \), which is induced by the diffusion process $\eqref{1.1}$, as
\[
\mu_x(B) = P\left( \{ \omega \in \Omega \mid X_t(\omega) \in B \} \right), \quad B \in \mathcal{B}_{x_0}.
\]
If \( B = I_n \), then
\[
\mu_x(I_n) = P_{x_0}^n\left( \{ (x(t_1), \ldots, x(t_n)) \in E \} \right),
\]
where \( E \) is the Borel set in \( \mathbb{R}^n \) under consideration, and \( P_{x_0}^n \) denotes the \( n \)-dimensional probability measure of the diffusion process \( X_t \).

\subsection{Onsager-Machlup functional}
In the problem of finding the most probable path of a diffusion process, the probability of a single path is zero. Instead, we can search for the probability that the path lies within a certain region, which could be a tube along a differentiable function. This tube is defined as
\[
K(\varphi, \epsilon) = \{ x \in H^1_{x_0} \mid \varphi \in H^1_{x_0}, \|x - \varphi\| \leq \epsilon, \epsilon > 0 \}.
\]
Once \( \epsilon > 0 \) is given, the probability of the tube can be expressed as
\[
\mu_x(K(\varphi, \epsilon)) = P\left( \{ \omega \in \Omega \mid X_t(\omega) \in K(\varphi, \epsilon) \} \right),
\]
allowing us to compare the probabilities of the tubes for all \( \varphi\in H^1_{x_0} \), since \( K(\varphi, \epsilon) \in \mathcal{B}_{x_0} \).

Thus, the Onsager-Machlup function can be defined as the Lagrangian function that gives the most probable tube. We now introduce the definitions of the Onsager-Machlup function and the Onsager-Machlup functional.
\begin{definition}
	Consider a tube surrounding a reference path $\varphi_t$ with initial value $\varphi_0 = x$ and $\varphi_t - x$ belongs to $H^1$. Assuming $\epsilon$ is given and small enough, we estimate the probability that the solution process $X_t$ is located in that tube as:
	\begin{displaymath}
		\mathbb{P} \left\{ \Vert X - \varphi\Vert \leq \epsilon\right\}  \propto C(\epsilon) {\rm exp} \left\{ -\frac{1}{2} \int_{0}^{1} {OM(t, \varphi, \dot{\varphi})} \,{\rm d}t \right\},
	\end{displaymath}
	where $\propto$ denotes the equivalence relation for $\epsilon$ small enough. Then we call the integrand $OM(t, \varphi, \dot{\varphi})$ the Onsager-Machulup function and also call integral $\int_{0}^{1} {OM(t, \varphi, \dot{\varphi})} \,{\rm d}t$ the Onsager-Machulup functional. In analogy to classical mechanics, we also refer to the Onsager-Machulup function as the Lagrangian function and the Onsager-Machulup functional as the action functional.
\end{definition}

\subsection{Quasi-translation invariant measure}
It is well known that the uniqueness of the Lebesgue measure \(\mu_L\) on \(\mathbb{R}^n\) is characterized by its translation invariance. Specifically, if \(\mathcal{T}\) is a translation on \(\mathbb{R}^n\), then for each \(E \in \mathcal{B}^n\), the following holds:
\begin{equation*}
	\mathcal{T} E \in \mathcal{B}^n \quad \text{and} \quad \mu_L(\mathcal{T}E) = \mu_L(E)
	\label{2.19}
\end{equation*}
However, in the space \(H^1_{x_0}\), such a one-to-one mapping corresponds to a translation by any function \(\varphi(t) \in H^1_{x_0}\). It is evident that a translation-invariant measure \(\mu_x\) on \(H^1_{x_0}\) does not exist \cite{26}. To address this, we introduce a weaker concept from \cite{5}, the quasi translation invariant measure, defined as follows:

\begin{definition}\label{D2.5}
	Let \(\mathcal{T}\) be a transformation \(T: H^1_{x_0} \rightarrow H^1_{x_0}\) such that
	\begin{equation*}
		\mathcal{T} x = x + \varphi,
		\label{2.20}
	\end{equation*}
	where \( \varphi \in H^1_{x_0} \) is absolutely continuous, and its derivative belongs to the space \( H \). Consider the diffusion process \(X_t\) and its translated process
	\begin{equation*}
		\mathcal{T} X_t = X_t + \varphi(t).
		\label{2.21}
	\end{equation*}
	If the induced measures \(\mu_X\) and \(\mu_{\mathcal{T}X}\) are equivalent, then \(\mu_X\) and \(\mu_{\mathcal{T}X}\) are called quasi translation invariant measures.
\end{definition}

For stochastic differential equations with constant coefficients, \cite{5,14} present several results concerning quasi translation invariant measures. In this work, we extend these results to stochastic differential equations with time-dependent coefficients.
\begin{lemma}\label{L2.7}
	The stochastic differential equation $\eqref{1.1}$ induces a quasi translation invariant measure $\mu_X$, and the Radon–Nikodym derivative is given by
	\begin{equation*}
		\frac{{\rm d} \mu_{\mathcal{T}X}}{{\rm d} \mu_X} [X_t(\omega)] = \exp \left\{ \int_s^u a(t, X_{t}, \varphi(t)) \,{\rm d}B_t - \frac{1}{2} \int_s^u \left( a\left( X_{t}, \varphi(t)\right) \right)^2 \,{\rm d}t \right\},
		\label{3.13}
	\end{equation*}
	where
	\begin{equation*}
		a(t, X_{t}, \varphi(t)) = \frac{f(t, X - \varphi) - f(t,X) + \dot{\varphi}}{g(t)}.
	\end{equation*}
\end{lemma}
\begin{proof}
	Applying the translation \(\mathcal{T}\) to equation \eqref{1.1} in accordance with Definition \ref{D2.5}, we obtain:
	\begin{equation*}
		\begin{aligned}
			{\rm d} \mathcal{T}X_t &={\rm d} \left( X_t + \varphi(t)\right) \\
			&= \left( f(t, X_t) + \dot{\varphi}(t)\right) \,{\rm d} t + g(t) \,{\rm d}W_t\\
			&= \left( f(t, \mathcal{T} X_t - \varphi(t)) + \dot{\varphi}(t)\right) \,{\rm d} t + g(t) \,{\rm d} W_t
		\end{aligned}
	\end{equation*}
	Since the diffusion process remains unchanged under the translation \(\mathcal{T}\), the proof is completed by Girsanov's theorem, as referenced in \cite{27}.
\end{proof}

We denote \( J_X[X_t, \varphi(t)] \) as the Radon-Nikodym derivative of \( \mu_{\mathcal{T}X} \) with respect to \( \mu_X \). If we replace \( \varphi(t) \) by \( -\varphi(t) \) in these functionals, they refer to the Radon-Nikodym derivative of \( \mu_{\mathcal{T}^{-1}X} \) with respect to \( \mu_X \). The following lemma will demonstrate the significance of the aforementioned quasi-translation invariant measure.
\begin{lemma}\label{L2.8}
	If we consider the translation map \( \mathcal{T} \) defined in Definition \ref{D2.5} and define \( \mathcal{M} = \{ E \in \mathcal{B}_{x_0} : \mathcal{T}^{-1}E \in \mathcal{B}_{x_0} \} \), then \( T \) is measurable on \( \mathcal{M} \). If \( F(x) \) is a measurable functional on \( H^1_{x_0} \) and \( \mu_x \) is a quasi-translation invariant measure, the following equation holds on \( \mathcal{M} \):
	\[
	\int_A F(y) d\mu_x(y) = \int_{\mathcal{T}^{-1}A} F(x + \varphi) J_X[x, -\varphi] d\mu_x(x).
	\]
\end{lemma}
\begin{proof}
	Based on the definition of \( J_X[x, \varphi] \) as the Radon-Nikodym derivative of \( \mu_{T^{-1}X} \) with respect to \( \mu_X \), we have the following relationship:
	\[
	J_X[x, -\varphi] \, d\mu_X(x) = d\mu_{T^{-1}X}(x).
	\]
	Now, for any set \( E \in \mathcal{B}_{x_0} \), the following equality holds:
	\[
	\mu_{T^{-1}X}(T^{-1}E) = P\left( \{ \omega \mid T^{-1} X_t(\omega) \in T^{-1} E \} \right) = P\left( \{ \omega \mid X_t(\omega) \in E \} \right) = \mu_X(E),
	\]
	which leads to the desired result in lemma $\ref{L2.8}$. 
\end{proof}
\begin{remark}
	Here, \( g(t) \) must be a function solely dependent on time and not on the spatial variable \( x \), otherwise, Lemma 2.7 would not hold. In other words, if the diffusion coefficient \( g(t, x) \) depends on both time and space, the induced measure of equation $\eqref{1.1}$ would not be quasi translation invariant.
\end{remark}

\subsection{Technical lemmas and theorems}
In this section, we will introduce several commonly utilized technical lemmas and theorems. Throughout this paper, if not mentioned otherwise, $\mathbb{E} \left(A \big| B\right)$ represents the conditional expectation of $A$ under $B$. 

When we derive the Onsage-Machup functional of SDEs with additive noise, the following lemma is the most basic one, as it ensures that we handle each term separately. Its proof can be found in \cite{18}.
\begin{lemma}\label{lemma 2.3}
	For a fixed integer $N \geq 1$, let $X_1, ..., X_N \in \mathbb{R}$ be $N$ random variables defined on $(\Omega, \mathcal{F}, \left\{ \mathcal{F}_t \right\}_{t \geq 0}, \mathbb{P})$ and $\left\{D_{\epsilon}; \epsilon > 0 \right\}$ be a family of sets in $\mathcal{F}$. Suppose that for any $c \in \mathbb{R}$ and any $i = 1, ..., N$, we have
	\begin{displaymath}
		\limsup\limits_{\epsilon \to 0} \mathbb{E}\left({\rm exp}\left\{ c X_i \right\}\big|D_{\epsilon} \right) \leq 1.
	\end{displaymath}
	Then
	\begin{displaymath}
		\limsup\limits_{\epsilon \to 0} \mathbb{E}\left({\rm exp}\left\{ \sum_{i = 1}^{N}c X_i \right\} \big|D_{\epsilon} \right)= 1.
	\end{displaymath}
\end{lemma}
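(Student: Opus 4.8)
The plan is to deduce the conclusion from two one-sided estimates: an upper bound $\limsup_{\epsilon\to0}\mathbb{E}({\rm exp}\{\sum_i cX_i\}\mid D_\epsilon)\le 1$, read off from the hypothesis via Hölder's inequality, and a matching lower bound, obtained by a Cauchy--Schwarz duality step that reduces it to the \emph{same} upper bound applied with $c$ replaced by $-c$. I would work throughout on the probability space $(\Omega,\mathcal F,\mathbb P(\,\cdot\mid D_\epsilon))$; this is legitimate as soon as $\mathbb P(D_\epsilon)>0$, which holds for all small $\epsilon$ in every use of the lemma (and may be taken as a standing hypothesis), so that the conditional expectations are well defined and the classical integral inequalities apply verbatim. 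Note also that $\limsup_{\epsilon\to0}\mathbb{E}({\rm exp}\{cX_i\}\mid D_\epsilon)\le 1$ forces these quantities to be finite for all small $\epsilon$, so no $\infty$-valued ambiguities arise below.

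For the upper bound, I would use ${\rm exp}\{\sum_{i=1}^N cX_i\}=\prod_{i=1}^N{\rm exp}\{cX_i\}$ together with the generalized Hölder inequality on $(\Omega,\mathcal F,\mathbb P(\,\cdot\mid D_\epsilon))$ with all $N$ exponents equal to $N$ (their reciprocals summing to $1$):
\[
\mathbb E\Big({\rm exp}\Big\{\sum_{i=1}^N cX_i\Big\}\,\Big|\,D_\epsilon\Big)\le\prod_{i=1}^N\Big(\mathbb E\big({\rm exp}\{NcX_i\}\,\big|\,D_\epsilon\big)\Big)^{1/N}.
\]
Passing to $\limsup_{\epsilon\to0}$ and using the elementary fact that $\limsup(a_\epsilon b_\epsilon)\le(\limsup a_\epsilon)(\limsup b_\epsilon)$ for nonnegative families with finite limsups (iterated over the $N$ factors), together with the hypothesis applied to the real number $Nc$, gives $\limsup_{\epsilon\to0}\mathbb E({\rm exp}\{\sum_i cX_i\}\mid D_\epsilon)\le\prod_{i=1}^N 1=1$.

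For the lower bound, write $S=\sum_{i=1}^N cX_i$ and apply the Cauchy--Schwarz inequality on $(\Omega,\mathcal F,\mathbb P(\,\cdot\mid D_\epsilon))$ to ${\rm exp}\{S/2\}$ and ${\rm exp}\{-S/2\}$:
\[
1=\Big(\mathbb E\big({\rm exp}\{S/2\}\,{\rm exp}\{-S/2\}\,\big|\,D_\epsilon\big)\Big)^2\le\mathbb E\big({\rm exp}\{S\}\,\big|\,D_\epsilon\big)\,\mathbb E\big({\rm exp}\{-S\}\,\big|\,D_\epsilon\big),
\]
whence $\mathbb E({\rm exp}\{S\}\mid D_\epsilon)\ge 1/\mathbb E({\rm exp}\{-S\}\mid D_\epsilon)$, the right-hand side being a finite positive number for small $\epsilon$. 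Since $-S=\sum_{i=1}^N(-c)X_i$ has exactly the form already treated, the upper bound (with $-c$ in place of $c$) gives $\limsup_{\epsilon\to0}\mathbb E({\rm exp}\{-S\}\mid D_\epsilon)\le 1$, hence $\liminf_{\epsilon\to0}\mathbb E({\rm exp}\{S\}\mid D_\epsilon)\ge 1$. Together with the upper bound this shows that $\lim_{\epsilon\to0}\mathbb E({\rm exp}\{S\}\mid D_\epsilon)$ exists and equals $1$, which in particular yields the asserted identity for the $\limsup$.

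The lemma is elementary and the computations are routine; the step that most warrants care — and where a hasty argument could slip — is the inequality $\limsup(a_\epsilon b_\epsilon)\le(\limsup a_\epsilon)(\limsup b_\epsilon)$, which genuinely needs nonnegativity and finiteness of the limsups involved (both supplied by the hypothesis), and the small amount of bookkeeping that the hypothesis is available for \emph{every} real $c$, so that the rescaling $c\mapsto Nc$ in the Hölder step and the sign flip $c\mapsto -c$ in the duality step are both admissible. No structural property of the $X_i$ or of the sets $D_\epsilon$ beyond $\mathbb P(D_\epsilon)>0$ enters the proof.
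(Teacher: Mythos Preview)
Your argument is correct and complete: the Hölder step with all exponents equal to $N$ gives the upper bound (using that the hypothesis is available for the rescaled constant $Nc$), and the Cauchy--Schwarz identity $1\le \mathbb E(e^{S}\mid D_\epsilon)\,\mathbb E(e^{-S}\mid D_\epsilon)$ converts this into the matching lower bound. The bookkeeping about nonnegativity, finiteness of the limsups, and $\mathbb P(D_\epsilon)>0$ is handled carefully.

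The paper itself does not prove this lemma; it simply states it and refers to \cite{18} for the proof. What you have written is precisely the standard argument one finds in that reference (and in Ikeda--Watanabe, Shepp--Zeitouni, etc.), so there is nothing to compare: your proof \emph{is} the classical one. If anything, your write-up is more explicit than most textbook presentations about the use of the hypothesis at the points $Nc$ and $-c$, and about why the $\limsup$ of a product is controlled by the product of $\limsup$'s here.
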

The following two theorems are fundamental parts of calculating Onsage-Machup functional. Their proofs can be found in \cite{16}.
\begin{theorem}\label{theorem 2.4}
	Let $\mathcal{N}$ be a measurable norm on $H^1$. For any $f \in L^2([0, 1])$, we have
	\begin{displaymath}
		\lim\limits_{\epsilon \to 0}\mathbb{E} \left( {\rm exp} \left\{ {\int_{0}^{1} {f(s)} \,{\rm d}W_s } \right\} \big| \tilde{\mathcal{N}} \leq \epsilon \right) = 1,
	\end{displaymath}
where $\tilde{\mathcal{N}}$ is defined by Definition $\ref{definition 2.2}$.
\end{theorem}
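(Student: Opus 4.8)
The plan is to prove the asserted limit by establishing the two one-sided bounds $\liminf_{\epsilon\to0}\ge1$ and $\limsup_{\epsilon\to0}\le1$ for the conditional expectation. Write $\hat h:=\int_0^1 f(s)\,\mathrm{d}W_s$, a centered Gaussian element of the first Wiener chaos with variance $\|f\|^2:=\|f\|_{L^2}^2$, and $D_\epsilon:=\{\tilde{\mathcal N}<\epsilon\}$, which satisfies $\mathbb P(D_\epsilon)>0$ by Definition \ref{definition 2.2}. Since $\mathcal N$ is even and $\mathcal P_n^{-W}=-\mathcal P_n^W$, the event $D_\epsilon$ is invariant under $W\mapsto-W$, whereas $\hat h$ is odd; pairing $\omega$ with $-\omega$ therefore gives
\[
\mathbb E\!\left[e^{\hat h}\mathbf 1_{D_\epsilon}\right]=\mathbb E\!\left[\tfrac12\bigl(e^{\hat h}+e^{-\hat h}\bigr)\mathbf 1_{D_\epsilon}\right]=\mathbb E\!\left[\cosh(\hat h)\,\mathbf 1_{D_\epsilon}\right]\ge\mathbb P(D_\epsilon),
\]
so $\mathbb E[e^{\hat h}\mid D_\epsilon]\ge1$ for every $\epsilon>0$. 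This disposes of the lower bound.

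For the upper bound I would combine the Cameron--Martin theorem with a coordinatewise expansion of $\hat h$. Let $B$ be the Banach space obtained by completing $H^1$ in the norm $\mathcal N$; by Gross's theory of abstract Wiener spaces, $W$ takes values in $B$ almost surely and $\tilde{\mathcal N}=\|W\|_B$ a.s. Because $\mathcal N$ is a norm, $H^1\hookrightarrow B$ is a dense injective embedding, so $B^*$ separates the points of $H^1$ and the coordinate functionals dualize: one may pick an orthonormal basis $(e_k)_{k\ge1}$ of $H^1$ and functionals $\ell_k\in B^*$ with $\ell_k|_{H^1}=\langle e_k,\cdot\rangle_{H^1}$, and then the i.i.d.\ $N(0,1)$ variables $\xi_k:=\int_0^1 e_k'(s)\,\mathrm{d}W_s=\ell_k(W)$ obey the pointwise bound $|\xi_k|\le\|\ell_k\|_{B^*}\,\tilde{\mathcal N}$ a.s. Writing $h\in H^1$ for the primitive $h(t)=\int_0^t f$ and expanding $h=\sum_k c_ke_k$ (so $\sum_kc_k^2=\|f\|^2$), we get $\hat h=\sum_kc_k\xi_k$, and on $D_\epsilon$ the head $\hat h_K:=\sum_{k\le K}c_k\xi_k$ satisfies $|\hat h_K|\le\epsilon\,C_K$ with $C_K:=\sum_{k\le K}|c_k|\,\|\ell_k\|_{B^*}$.

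Now set $h^K:=\sum_{k>K}c_ke_k$ and $\hat h^K:=\sum_{k>K}c_k\xi_k=\int_0^1 (h^K)'\,\mathrm{d}W$, with $\sigma_K^2:=\|h^K\|_{H^1}^2=\sum_{k>K}c_k^2\to0$ as $K\to\infty$. On $D_\epsilon$ one has $e^{\hat h}=e^{\hat h_K}e^{\hat h^K}\le e^{\epsilon C_K}e^{\hat h^K}$, and applying the Cameron--Martin formula to the density $\exp\{\hat h^K-\tfrac12\sigma_K^2\}$ gives
\[
\mathbb E\!\left[e^{\hat h}\mathbf 1_{D_\epsilon}\right]\le e^{\epsilon C_K}\,\mathbb E\!\left[e^{\hat h^K}\mathbf 1_{D_\epsilon}\right]=e^{\epsilon C_K+\sigma_K^2/2}\,\mathbb P\!\left(\|W+h^K\|_B<\epsilon\right).
\]
The ball $\{\|\cdot\|_B<\epsilon\}$ is convex and symmetric, so Anderson's inequality yields $\mathbb P(\|W+h^K\|_B<\epsilon)\le\mathbb P(\|W\|_B<\epsilon)=\mathbb P(D_\epsilon)$. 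Dividing, $\mathbb E[e^{\hat h}\mid D_\epsilon]\le e^{\epsilon C_K+\sigma_K^2/2}$; first letting $\epsilon\to0$ and then $K\to\infty$ gives $\limsup_{\epsilon\to0}\mathbb E[e^{\hat h}\mid D_\epsilon]\le1$, which together with the lower bound proves the claim.

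The two substantive inputs are the pointwise coordinate estimate $|\xi_k|\le\|\ell_k\|_{B^*}\,\tilde{\mathcal N}$ a.s.\ and Anderson's inequality for the Cameron--Martin shift of a symmetric convex ball. I expect the first to be the main obstacle: it requires identifying the a.s.\ limit $\tilde{\mathcal N}$ with an honest Banach-space norm $\|W\|_B$ on the abstract Wiener space $(H^1,B)$ generated by $\mathcal N$, and checking that $B^*$ separates the points of $H^1$ — i.e.\ that the construction behind Definition \ref{definition 2.2} really produces such a $B$ together with convergence $\mathcal P_n^W\to W$ in $B$. Once this is in place, the symmetrization in the lower bound, the Girsanov/Cameron--Martin rewriting, and the iterated limit in $\epsilon$ and $K$ are routine.
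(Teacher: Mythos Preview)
The paper does not give its own proof of Theorem~\ref{theorem 2.4}; it is quoted from reference~[16] (``Their proofs can be found in \cite{16}''). So there is no in-paper argument to compare your proposal against.

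On its own merits your outline is sound and close to the classical route (symmetry/Anderson for the lower bound, Cameron--Martin plus Anderson for the upper bound, with a head/tail split to kill the remaining constant). One technical point deserves tightening. You write ``one may pick an orthonormal basis $(e_k)$ of $H^1$ and functionals $\ell_k\in B^*$ with $\ell_k|_{H^1}=\langle e_k,\cdot\rangle_{H^1}$.'' This is not automatic for an arbitrary orthonormal basis: under the abstract Wiener triple $B^*\hookrightarrow H^1\hookrightarrow B$, the functional $\langle e_k,\cdot\rangle_{H^1}$ extends continuously to $B$ precisely when $e_k$ lies in the (dense) image of $B^*$ inside $H^1$. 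The fix is easy---choose the $e_k$ from $B^*$ to begin with (Gram--Schmidt on a countable $H^1$-dense subset of $B^*$), and then your pointwise bound $|\xi_k|=|\ell_k(W)|\le\|\ell_k\|_{B^*}\,\tilde{\mathcal N}$ is legitimate. Your stated worry that ``$B^*$ separates the points of $H^1$'' is a red herring (that follows from Hahn--Banach on $B$); the genuine input you need is the stronger fact that $B^*$ is \emph{dense} in $H^1$, which is part of Gross's abstract Wiener space theory and also underwrites the identification $\tilde{\mathcal N}=\|W\|_B$ a.s. Once that is in place, the Cameron--Martin rewriting, Anderson's inequality on the symmetric ball $\{\|\cdot\|_B<\epsilon\}$, and the iterated limit $\epsilon\to0$ then $K\to\infty$ go through as you describe.
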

\begin{definition}
	 We say that an operator $S : H \to H$ is nuclear, if
	\begin{displaymath}
		\sum_{n = 1}^{\infty} \left|\langle  Se_n, k_n  \rangle \right| < \infty,
	\end{displaymath}
	for any orthonormal sequences $B_1 = (e_n)_n$ and $B_2 = (k_n)_n$ in $H$.
\end{definition}
We define the trace of a nuclear operator $S$ as follows:
\begin{displaymath}
	Tr.S = \sum_{n = 1}^{\infty} \langle  Se_n, e_n  \rangle
\end{displaymath}
for any orthonormal sequence $B = (e_n)_n$ in $H$. The definition of trace is independent of the orthonormal sequences we choose. For a given symmetric function $f \in L^2([0, 1]^2)$, the Hilbert-Schmidt operator $S(f) : H \to H$ is defined as:
\begin{displaymath}
	\left(S \left(f \right) \right)(h)(t) = \int_{0}^{t} {f(t, s)h(s)} \,{\rm d}s
\end{displaymath}
is nuclear if $\sum_{n = 1}^{\infty} \langle  Se_n, e_n  \rangle < \infty$ for any orthonormal sequence $B = (e_n)_n$ in $H$. When the function $f$ is continuous and the operator $S(f)$ is nuclear, the trace of $f$ has the following expression(see \cite{19}):
\begin{displaymath}
	Tr.f := Tr.S(f) = \int_{0}^{1} {f(t, t)} \,{\rm d}t.
\end{displaymath}
Furthermore, when ${f(s, t)}$ is a continuous $n \times n$ covariance kernel in the square $0 \leq s, t \leq 1$, the corresponding operator $S$ is nuclear and the expression for its trace is as follows:
\begin{displaymath}
	Tr.f = Tr.S(f) = \int_{0}^{1} {Tr.f(t, t)} \,{\rm d}t.
\end{displaymath}
\begin{theorem}\label{theorem 2.5}
	Let $f$ be a symmetric function in $L^2([0, 1]^2)$ and let $\mathcal{N}$ be a measurable norm. If $S(f)$ is nuclear, then
	\begin{displaymath}
		\lim\limits_{\epsilon \to 0} \mathbb{E} \left(  {\rm exp}\left\{ \int_{0}^{1} {\int_{0}^{1} {f(s, t)} \,{\rm d}W_s} \,{\rm d}W_t   \right\} \big| \tilde{\mathcal{N}} \leq \epsilon \right)  = e^{-Tr.(f)}.
	\end{displaymath}
\end{theorem}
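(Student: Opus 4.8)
The plan is to diagonalize the quadratic functional in the exponent and reduce the statement to the first--chaos estimate of Theorem~\ref{theorem 2.4}, using Lemma~\ref{lemma 2.3} to recombine the coordinates. Since $f$ is symmetric and square--integrable, it induces a self--adjoint Hilbert--Schmidt operator on $H$; let $(e_k)_{k\ge1}$ be an orthonormal system of normalized eigenfunctions with real eigenvalues $(\lambda_k)_{k\ge1}$, completed to an orthonormal basis of $H$. Nuclearity of $S(f)$ gives $\sum_{k\ge1}|\lambda_k|<\infty$ and $Tr.(f)=\sum_{k\ge1}\lambda_k$. Setting $\xi_k=\int_0^1 e_k(s)\,{\rm d}W_s$, the $\xi_k$ are i.i.d.\ standard Gaussian and the double Wiener--It\^o integral decomposes as $\int_0^1\!\int_0^1 f(s,t)\,{\rm d}W_s\,{\rm d}W_t=\sum_{k\ge1}\lambda_k(\xi_k^2-1)$, with convergence in $L^2(\Omega)$. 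Hence $\exp\{\int_0^1\!\int_0^1 f\,{\rm d}W\,{\rm d}W\}=e^{-Tr.(f)}\exp\{\sum_{k\ge1}\lambda_k\xi_k^2\}$, and it remains to prove $\lim_{\epsilon\to0}\mathbb{E}\big[\exp\{\sum_{k\ge1}\lambda_k\xi_k^2\}\,\big|\,\tilde{\mathcal{N}}<\epsilon\big]=1$.

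Next I would show that conditioning on a vanishing ball drives each coordinate to zero. Each $\xi_k$ lies in the first Wiener chaos, so Theorem~\ref{theorem 2.4} applies to $\exp\{c\xi_k\}$ for every $c\in\mathbb{R}$; since the conditional moment generating function of $\xi_k$ given $\{\tilde{\mathcal{N}}<\epsilon\}$ converges to $1$ for every $c$, analyticity forces $\xi_k\to0$ in $L^p$ conditionally on $\{\tilde{\mathcal{N}}<\epsilon\}$ for every $p$, and in particular $\mathbb{E}[\xi_k^2\mid\tilde{\mathcal{N}}<\epsilon]\to0$ as $\epsilon\to0$ for each fixed $k$. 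Consequently $\mathbb{E}[\exp\{c\lambda_k\xi_k^2\}\mid\tilde{\mathcal{N}}<\epsilon]\to1$ for each fixed $k$ and every $c$, so Lemma~\ref{lemma 2.3} applied to the finitely many variables $\lambda_1\xi_1^2,\dots,\lambda_N\xi_N^2$ yields $\limsup_{\epsilon\to0}\mathbb{E}[\exp\{\sum_{k\le N}\lambda_k\xi_k^2\}\mid\tilde{\mathcal{N}}<\epsilon]=1$; the matching bound $\liminf\ge1$ follows from Fatou's lemma along subsequences together with $\sum_{k\le N}\lambda_k\xi_k^2\to0$ in conditional probability. Thus the truncated conditional expectation tends to $1$ for every fixed $N$.

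To pass from $N$ to $\infty$ I would control the tail in conditional $L^1$: $\mathbb{E}[|\sum_{k>N}\lambda_k\xi_k^2|\mid\tilde{\mathcal{N}}<\epsilon]\le\sum_{k>N}|\lambda_k|\,\mathbb{E}[\xi_k^2\mid\tilde{\mathcal{N}}<\epsilon]\le C\sum_{k>N}|\lambda_k|$, once one has a bound $\mathbb{E}[\xi_k^2\mid\tilde{\mathcal{N}}<\epsilon]\le C$ uniform in both $k$ and $\epsilon\le\epsilon_0$; since $\sum_{k}|\lambda_k|<\infty$ this is uniformly small in $\epsilon$, and combining it with the previous paragraph (splitting $\exp\{\sum_{k\le N}\lambda_k\xi_k^2\}\cdot\exp\{\sum_{k>N}\lambda_k\xi_k^2\}$ by Cauchy--Schwarz) gives $\mathbb{E}[\exp\{\sum_{k\ge1}\lambda_k\xi_k^2\}\mid\tilde{\mathcal{N}}<\epsilon]\to1$. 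Restoring the deterministic prefactor $e^{-Tr.(f)}$ finishes the proof. An alternative for the truncated step, when $S(f)\ge0$, is to linearize each factor with an auxiliary independent Gaussian, $\exp\{\lambda_k\xi_k^2\}=\mathbb{E}_{\eta_k}\exp\{\sqrt{2\lambda_k}\,\eta_k\xi_k\}$, so that conditionally on $\eta$ the exponent is the single first--chaos integral $\int_0^1\big(\sum_{k\le N}\sqrt{2\lambda_k}\,\eta_k e_k(s)\big)\,{\rm d}W_s$ and Theorem~\ref{theorem 2.4} applies verbatim, the $\eta$--average being passed through by dominated convergence.

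The main obstacle, and the place where the measurable--norm hypothesis must be used quantitatively rather than just qualitatively, is precisely the uniform estimate $\sup_{k\ge1}\sup_{0<\epsilon\le\epsilon_0}\mathbb{E}[\xi_k^2\mid\tilde{\mathcal{N}}<\epsilon]<\infty$ --- equivalently, a version of Theorem~\ref{theorem 2.4} furnishing a bound on $\mathbb{E}[\exp\{W(g)\}\mid\tilde{\mathcal{N}}<\epsilon]$ that depends only on $\|g\|_H$ and not on $\epsilon$. The crude bound $\mathbb{E}[\xi_k^2\mid\tilde{\mathcal{N}}<\epsilon]\le 1/\mathbb{P}(\tilde{\mathcal{N}}<\epsilon)$ is useless since $\mathbb{P}(\tilde{\mathcal{N}}<\epsilon)\to0$, so one must genuinely exploit that a measurable norm distributes its mass over infinitely many directions, so that the small ball cannot concentrate on large values of a fixed finite--dimensional projection; one must also track the size regime of the $\lambda_k$ to keep the exponential integrals in the Cauchy--Schwarz and auxiliary--Gaussian steps finite.
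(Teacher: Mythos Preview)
The paper does not prove Theorem~\ref{theorem 2.5}; it is quoted in the preliminaries with the remark ``Their proofs can be found in \cite{16}'', so there is no in-paper argument to compare your proposal against.

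That said, your spectral-decomposition outline is the route taken in the cited source, and the obstacle you isolate in your last paragraph is genuine and is exactly the crux of that proof. Two earlier steps in your write-up already presuppose it. First, invoking Lemma~\ref{lemma 2.3} with $X_i=\lambda_i\xi_i^2$ requires $\limsup_{\epsilon\to0}\mathbb{E}[\exp\{c\lambda_i\xi_i^2\}\mid\tilde{\mathcal{N}}<\epsilon]\le1$ for \emph{every} real $c$; when $c\lambda_i>0$ the integrand is $\ge1$ deterministically, so this can only hold if the conditional law of $\xi_i$ concentrates at $0$ with enough uniform integrability to push the limit through the exponential --- pointwise convergence of the conditional moment generating function (your ``analyticity'' step) does not by itself deliver that. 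Second, your Cauchy--Schwarz tail step needs control of $\mathbb{E}[\exp\{2\sum_{k>N}\lambda_k\xi_k^2\}\mid\tilde{\mathcal{N}}<\epsilon]$, not merely of the conditional $L^1$-norm of the exponent, so the uniform second-moment bound you propose is too weak as stated. In the cited reference both issues are resolved by the same structural estimate for measurable norms --- a bound on $\mathbb{E}[\exp\{cW(g)^2\}\mid\tilde{\mathcal{N}}<\epsilon]$ uniform over $\|g\|_H\le1$ and over small $\epsilon$ --- which is precisely the quantitative input you correctly flag as missing.
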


The following theorem and lemma are about the probability estimation of Brownian motion balls, which are the basis for the theorem in this article. The proof of the theorem can be found in \cite{20}.
\begin{theorem}\label{theorem 2.6}
	Let $\left\{W(t): t \geq 0\right\}$ be a sample continuous Brownian motion in $\mathbb{R}$ and set 
	\begin{displaymath}
		\Phi_{\alpha}(\epsilon) = {\rm log}\mathbb{P}\left(\Vert W \Vert_{\alpha} \leq \epsilon \right).
	\end{displaymath}
	If $0 < \alpha < \frac{1}{2}$, then 
	\begin{displaymath}
		\lim\limits_{\epsilon \to 0} \epsilon^{\frac{2}{1 - 2\alpha}} \Phi_{\alpha}(\epsilon) = -C_{\alpha}
	\end{displaymath}
	exists with
	\begin{displaymath}
		2^{-\frac{2\left(1 - \alpha\right)}{\left(1 - 2\alpha\right)}} \Lambda_{\alpha} \leq C_{\alpha} \leq \left(2^{-\frac{1}{2}} \left( 2^{\alpha} - 1\right) \left( 2^{1 - \alpha} - 1\right)\right)^{-\frac{2(1 - \alpha)}{(1 - 2\alpha)}} \Lambda_{\alpha},
	\end{displaymath}
	where
	\begin{displaymath}
		\Lambda_{\alpha} = \left(\frac{2}{\pi}\right)^{\frac{1}{2}} \int_{0}^{\infty} {\frac{x^{\frac{2}{1 - 2\alpha}} e^{-\frac{x^2}{2}}}{1 - G(x)}} \,{\rm d}x \qquad\text{and}\qquad
		G(x) = \left(\frac{2}{\pi}\right)^{\frac{1}{2}} \int_{x}^{\infty} {e^{-\frac{y^2}{2}}} \,{\rm d}y.
	\end{displaymath}
\end{theorem}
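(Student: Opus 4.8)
The plan is to transfer the problem, via the L\'evy--Ciesielski (Schauder) expansion, to a small-ball estimate for an i.i.d.\ Gaussian array, to evaluate that estimate, and to get existence of the limit from a scaling/subadditivity argument. \emph{Step 1 (reduction to the Schauder coefficients).} Since $W_0=0$ one has $\Vert W\Vert_{0;[0,1]}\le[W]_{\alpha;[0,1]}$, hence $[W]_\alpha\le\Vert W\Vert_\alpha\le 2[W]_\alpha$, so one works with $\mathbb{P}([W]_\alpha\le\epsilon)$. Expand $W_t=\sum_{n\ge-1}\sum_k\xi_{n,k}s_{n,k}(t)$, where the $s_{n,k}$ are the Schauder functions (primitives of the Haar basis of $L^2[0,1]$) and $\xi_{n,k}=\int_0^1 h_{n,k}\,{\rm d}W_s$ are i.i.d.\ $N(0,1)$. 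Ciesielski's isomorphism theorem supplies explicit constants $c_1(\alpha),c_2(\alpha)>0$, assembled from geometric sums of the Hölder norms $[s_{n,k}]_\alpha\asymp 2^{n(\alpha-1/2)}$ — this is where the quantities $2^{\alpha}-1$, $2^{1-\alpha}-1$ and $2^{-1/2}$ in the statement originate — with
\[
c_1(\alpha)\,\Theta_\alpha\ \le\ [W]_\alpha\ \le\ c_2(\alpha)\,\Theta_\alpha,\qquad \Theta_\alpha:=\sup_{n\ge0}2^{n(\alpha-1/2)}\max_{0\le k<2^n}|\xi_{n,k}|,
\]
the coarsest coefficient contributing only a lower-order term. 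Hence $\log\mathbb{P}(\Theta_\alpha\le\epsilon/c_1)\le\log\mathbb{P}([W]_\alpha\le\epsilon)\le\log\mathbb{P}(\Theta_\alpha\le\epsilon/c_2)$.

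\emph{Step 2 (exact formula and its asymptotics).} By independence of the $\xi_{n,k}$ and the definition of $G$,
\[
\log\mathbb{P}(\Theta_\alpha\le\delta)=\sum_{n\ge0}2^{n}\log\mathbb{P}\bigl(|\xi|\le\delta\,2^{n(1/2-\alpha)}\bigr)=\sum_{n\ge0}2^{n}\log\bigl(1-G(\delta\,2^{n(1/2-\alpha)})\bigr).
\]
Writing $\gamma=\tfrac{2}{1-2\alpha}$ and substituting $x_n=\delta\,2^{n(1/2-\alpha)}$, so that $2^{n}=\delta^{-\gamma}x_n^{\gamma}$, the sum equals $\delta^{-\gamma}\sum_n x_n^{\gamma}\log(1-G(x_n))$. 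The function $x\mapsto x^{\gamma}\log(1-G(x))$ vanishes as $x\to0^{+}$ (since $x^{\gamma}|\log x|\to0$) and super-exponentially as $x\to\infty$ (since $\log(1-G(x))\sim-G(x)\sim-(2/\pi)^{1/2}x^{-1}e^{-x^{2}/2}$), so as $\delta\to0$ the points $x_n$ form an ever longer geometric progression and the sum is a fixed-mesh Riemann sum whose leading term is governed, via the integration by parts $-\gamma\int_0^\infty x^{\gamma-1}\log(1-G(x))\,{\rm d}x=\int_0^\infty (2/\pi)^{1/2}x^{\gamma}e^{-x^2/2}(1-G(x))^{-1}\,{\rm d}x=\Lambda_\alpha$, by the constant $\Lambda_\alpha$ of the statement. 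Combined with Step 1, this sandwiches $\liminf_{\epsilon\to0}\epsilon^{\gamma}\log\mathbb{P}([W]_\alpha\le\epsilon)$ and $\limsup_{\epsilon\to0}(\cdots)$ between the two displayed multiples of $\Lambda_\alpha$; the loss of a single sharp constant at this stage is caused precisely by the Ciesielski constants $c_1,c_2$ together with the oscillation intrinsic to a fixed-mesh Riemann sum.

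\emph{Step 3 (existence of the limit).} To upgrade the $\liminf/\limsup$ bounds to an honest limit I would invoke Brownian scaling. Cutting $[0,1]$ into $N$ equal pieces and using independence of the increments of $W$ on disjoint intervals with $[W]_{\alpha;[0,1/N]}\stackrel{d}{=}N^{\alpha-1/2}[W]_{\alpha;[0,1]}$ gives $\mathbb{P}([W]_\alpha\le\epsilon)\le(1-G(\epsilon N^{1/2-\alpha}))^{N}$; controlling cross-interval pairs $s\in I_i,\ t\in I_j$ through $\sup_{a,b>0}(a^{\alpha}+b^{\alpha})(a+b)^{-\alpha}=2^{1-\alpha}$ yields the matching bound $\mathbb{P}([W]_\alpha\le\epsilon)\ge\mathbb{P}([W]_\alpha\le 2^{-1/2}\epsilon)^{2}$ (the case $N=2$, and its $k$-fold iterate). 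Setting $a(\epsilon)=-\log\mathbb{P}([W]_\alpha\le\epsilon)$, these are a multiplicative near-sub/superadditivity for the monotone function $a$ at scale $2^{1/2-\alpha}$, and a Fekete/de~Bruijn-type Tauberian lemma then forces $\epsilon^{\gamma}a(\epsilon)$ to converge as $\epsilon\to0$ to a finite constant $C_\alpha$, which must lie in the interval produced by Step 2.

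\emph{Main obstacle.} The crux is the quantitative content of Step 2: converting the dyadic series into $\Lambda_\alpha$ while keeping explicit control of the boundary levels $n$ for which $2^{n(1/2-\alpha)}\approx\delta^{-1}$ (these carry the bulk of the contribution), together with the exact computation of the Ciesielski comparison constants $c_1(\alpha),c_2(\alpha)$ — it is there that the factor $2^{-1/2}(2^{\alpha}-1)(2^{1-\alpha}-1)$ enters. The Tauberian passage of Step 3 is routine once the scaling inequalities are written down, and the reduction in Step 1 is comparatively harmless, affecting only constants already inside the stated band.
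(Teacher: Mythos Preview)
The paper does not prove this theorem: it is quoted as a known result, with the sentence ``The proof of the theorem can be found in \cite{20}'' immediately preceding the statement. There is thus no in-paper argument to compare against; your outline---L\'evy--Ciesielski reduction to an i.i.d.\ Gaussian array, explicit evaluation of the resulting product, and a Brownian-scaling subadditivity argument for existence of the limit---is precisely the strategy of the cited source.

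One refinement to your Step~3: the clean inequality that actually forces existence of the limit is the one-sided superadditivity
\[
a(\epsilon)\ \ge\ N\,a\!\bigl(N^{1/2-\alpha}\epsilon\bigr)\qquad(N=2,3,\dots),
\]
obtained by restricting $[W]_\alpha$ to each of $N$ equal subintervals and using independence of Brownian increments; combined with the a~priori upper bound from Step~2 and continuity of $\epsilon\mapsto\mathbb{P}([W]_\alpha\le\epsilon)$ this gives $\liminf_{\epsilon\to0}\epsilon^{\gamma}a(\epsilon)\ge\delta^{\gamma}a(\delta)$ for every fixed $\delta>0$, hence $\liminf=\limsup$. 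The pair of $N=2$ inequalities you display is correct, but since the two dilation factors $2^{1/2-\alpha}$ and $2^{-1/2}$ do not coincide it is not a Fekete datum as stated. Relatedly, in Step~2 the geometric-grid sum $\sum_n x_n^{\gamma}\log(1-G(x_n))$ genuinely oscillates with $\log\delta$ modulo $(1/2-\alpha)\log 2$, so the Ciesielski side alone can only yield the two-sided bounds on $C_\alpha$ and never the limit itself; this is consistent with your closing remark, but worth making explicit.
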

\begin{lemma}\label{lemma 2.7}
	Let $g(t) \in  C^2_b([0, 1], \mathbb{R}^d)$, $ M > m > 0$ and assume $m \leq g(t) \leq M$, for any $0 \leq t \leq 1$. We have
	\begin{displaymath}
		mW_t\leq \int_{0}^{t} {g(s)} \,{\rm d}W_s \leq MW_t,
	\end{displaymath}
	for any $0 \leq t \leq 1$. According to Theorem $\ref{theorem 2.6}$, we have
	\begin{displaymath}
		\lim\limits_{\epsilon \to 0} \mathbb{P}\left(\Vert \int_{0}^{1} {g(t)} \,{\rm d}W_t \Vert_{\alpha} \leq  \epsilon \right) \geq \lim\limits_{\epsilon \to 0} \mathbb{P}\left(\Vert W \Vert_{\alpha} \leq \frac{\epsilon}{M} \right)
		\geq e^{-c \left( \frac{\epsilon}{M} \right)^{-\frac{2}{1 - 2\alpha}}},
	\end{displaymath}
	where $c = \left(2^{-\frac{1}{2}} \left( 2^{\alpha} - 1\right) \left( 2^{1 - \alpha} - 1\right)\right)^{-\frac{2(1 - \alpha)}{(1 - 2\alpha)}} \Lambda_{\alpha}$.
\end{lemma}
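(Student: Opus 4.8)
The plan is to establish the probability estimate by a Gaussian comparison argument, rather than through the stated pathwise inequality: an It\^o integral is not monotone in its integrand, so the chain $mW_t\le\int_0^t g\,{\rm d}W_s\le MW_t$ does not hold along individual trajectories, and it should be read as the comparison of H\"older-ball probabilities $\mathbb{P}(\Vert Y\Vert_{\alpha}\le\epsilon)\ge\mathbb{P}(\Vert MW\Vert_{\alpha}\le\epsilon)$, where $Y_t:=\int_0^t g(s)\,{\rm d}W_s$. Since $g$ is bounded and continuous, the Burkholder--Davis--Gundy inequality together with Kolmogorov's continuity criterion furnishes an $\alpha$-H\"older modification of $Y$, so both $Y$ and $MW$ are centered Gaussian processes with sample paths in $C^{\alpha}([0,1])$; by the It\^o isometry their covariance kernels are $K_Y(s,t)=\int_0^{s\wedge t}g(u)^2\,{\rm d}u$ and $K_M(s,t)=M^2(s\wedge t)$.

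The key observation is that, since $0<g\le M$, the kernel $K_M-K_Y=\int_0^{s\wedge t}\bigl(M^2-g(u)^2\bigr)\,{\rm d}u$ is again non-negative definite --- it is the covariance of $Z_t:=\int_0^t\sqrt{M^2-g(s)^2}\,{\rm d}W'_s$ for an independent Brownian motion $W'$, and $Z$ likewise admits an $\alpha$-H\"older modification. Taking $Y$ and $Z$ independent, the process $Y+Z$ is centered Gaussian with covariance $K_Y+(K_M-K_Y)=K_M$, hence has the same law as $MW$ on $C^{\alpha}([0,1])$.

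I would then invoke Anderson's inequality. For any times $0\le t_1<\cdots<t_k\le 1$, the map $v\mapsto\max_i|v_i|+\max_{i\ne j}|v_i-v_j|\,|t_i-t_j|^{-\alpha}$ is a seminorm on $\mathbb{R}^k$, so each of its sublevel sets $C$ is convex and symmetric; since $Y$ and $Z$ are independent and centered Gaussian, Anderson's inequality gives $\mathbb{P}\bigl((Y(t_i)+Z(t_i))_{i\le k}\in C\bigr)\le\mathbb{P}\bigl((Y(t_i))_{i\le k}\in C\bigr)$. A continuous path lies in the ball $\{\Vert\cdot\Vert_{\alpha}\le\epsilon\}$ exactly when all such finite constraints hold over rational times, so writing this ball as a decreasing intersection of cylinder sets and using continuity of probability yields
\begin{displaymath}
\mathbb{P}\bigl(\Vert Y\Vert_{\alpha}\le\epsilon\bigr)\ \ge\ \mathbb{P}\bigl(\Vert MW\Vert_{\alpha}\le\epsilon\bigr)=\mathbb{P}\bigl(\Vert W\Vert_{\alpha}\le\epsilon/M\bigr),
\end{displaymath}
the last equality by homogeneity of $\Vert\cdot\Vert_{\alpha}$. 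Finally, Theorem~\ref{theorem 2.6} with $\delta=\epsilon/M$ gives $\delta^{2/(1-2\alpha)}\log\mathbb{P}(\Vert W\Vert_{\alpha}\le\delta)\to-C_{\alpha}$ with $C_{\alpha}\le c$, so $\mathbb{P}(\Vert W\Vert_{\alpha}\le\epsilon/M)\ge\exp\{-c(\epsilon/M)^{-2/(1-2\alpha)}\}$ for all $\epsilon$ small enough, which is the assertion. The hypothesis $m\le g$ is not needed for this one-sided bound; it would enter symmetrically, via comparison with $mW$, only in a matching upper bound on the ball probability.

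The step I expect to be the main obstacle is exactly the one isolated above: the literal pathwise chain is unavailable, so the whole point is to recast it as the distributional comparison $\mathbb{P}(\Vert Y\Vert_{\alpha}\le\epsilon)\ge\mathbb{P}(\Vert MW\Vert_{\alpha}\le\epsilon)$ and to prove it through the Gaussian coupling and Anderson's inequality. The remaining ingredients --- the $\alpha$-H\"older regularity of $Y$ and $Z$, the reduction of the H\"older ball to finite-dimensional cylinder sets, and the fact that Theorem~\ref{theorem 2.6} is asymptotic (which is why the conclusion must be read for $\epsilon$ sufficiently small) --- are routine.
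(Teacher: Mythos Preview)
The paper offers no separate proof of this lemma; the argument is the one embedded in the statement itself: the pathwise chain $mW_t\le\int_0^t g(s)\,{\rm d}W_s\le MW_t$ is asserted outright, and the H\"older-ball comparison with Theorem~\ref{theorem 2.6} is then read off from it. You are right that this pathwise monotonicity is false --- the increments ${\rm d}W_s$ take both signs, so an It\^o integral is not monotone in its integrand along trajectories --- and hence the paper's literal reasoning has a gap at exactly the point you isolate.

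Your route via Gaussian comparison and Anderson's inequality is genuinely different and is the one that actually works. The covariance domination $K_M-K_Y\succeq 0$, the coupling $MW\stackrel{d}{=}Y+Z$ with $Z$ independent centered Gaussian, and Anderson's inequality applied to the convex symmetric H\"older ball yield $\mathbb{P}(\Vert Y\Vert_{\alpha}\le\epsilon)\ge\mathbb{P}(\Vert W\Vert_{\alpha}\le\epsilon/M)$ rigorously; the reduction to finite-dimensional cylinder sets over rational times is a clean way to avoid invoking an infinite-dimensional version of Anderson directly. Your observations that Theorem~\ref{theorem 2.6} is asymptotic (so the displayed exponential lower bound must be read for $\epsilon$ small), and that the hypothesis $m\le g$ is unused in this one-sided bound and would only enter in a matching upper estimate via comparison with $mW$, are also correct. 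In short, the paper's pathwise inequality should indeed be understood, as you propose, at the level of small-ball probabilities rather than sample paths, and your argument supplies the missing justification.
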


We define the following norms on $H^1$:
\begin{displaymath}
	\mathcal{N}_{g, 0}(h) := \sup\limits_{t \in [0, 1]}\left| \int_{0}^{t} {g(s) h^{\prime}(s)} \,{\rm d}s \right|,
\end{displaymath}
\begin{displaymath}
	\mathcal{N}_{g, \alpha}(h) := \sup\limits_{t \in [0, 1]} \frac{\vert \int_{0}^{t} {g(s) h^{\prime}(s)} \,{\rm d}s - \int_{0}^{r} {g(s) h^{\prime}(s)} \,{\rm d}s \vert}{\vert t - r \vert^{\alpha}}, \quad 0 < \alpha < \frac{1}{4},
\end{displaymath}
\begin{displaymath}
	\mathcal{N}_{g}(h) := \mathcal{N}_{g, 0}(h) + \mathcal{N}_{g, \alpha}(h), \quad 0 < \alpha < \frac{1}{4}.
\end{displaymath}
In order to apply the above theorems in this paper, we need the following lemma.
\begin{lemma}\label{lemma 2.8}
	$\mathcal{N}_g$ with $0 < \alpha <\frac{1}{2}$ is measurable norms and we have $\tilde{\mathcal{N}}_g = \Vert  \int_{0}^{1} {g(t)} \,{\rm d}W_t \Vert_{\alpha}$.
\end{lemma}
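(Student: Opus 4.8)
The plan is to realize $\mathcal{N}_g$ as the limit of a nondecreasing sequence of measurable seminorms and then invoke Lemma~\ref{lemma 2.2}. Concretely, I would decompose $\mathcal{N}_g = \mathcal{N}_{g,0} + \mathcal{N}_{g,\alpha}$ and treat the two pieces in tandem, since each is individually only a seminorm on $H^1$ while their sum is a genuine norm (if $\mathcal{N}_g(h)=0$ then $\int_0^t g(s)h'(s)\,ds \equiv 0$, and since $g(t)$ is bounded away from $0$ componentwise on $[0,1]$ this forces $h'=0$ a.e., hence $h=0$ because $h(0)=0$). The sup over $t\in[0,1]$ (and over pairs $t,r$ for the Hölder part) can be replaced by a sup over a countable dense set, and then truncated: set $\mathcal{N}_{g,0}^{(k)}(h) = \sup_{t\in Q_k}|\int_0^t g h'|$ and similarly $\mathcal{N}_{g,\alpha}^{(k)}$ over finitely many rationals $Q_k$, so that $\mathcal{N}_g^{(k)} := \mathcal{N}_{g,0}^{(k)} + \mathcal{N}_{g,\alpha}^{(k)}$ is a nondecreasing sequence with pointwise limit $\mathcal{N}_g$ on all of $H^1$ by continuity of $t\mapsto \int_0^t g h'$.

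Next I would identify the candidate limiting random variable. For a fixed finite-rank projection $\mathcal{P}$ with orthonormal basis $(h_i)$ of $\mathcal{P}H^1$, one computes $\int_0^t g(s)(\mathcal{P}^W)'(s)\,ds = \sum_i (\int_0^1 h_i'\,dW)\int_0^t g(s)h_i'(s)\,ds$. The key observation, exactly as in the constant-coefficient case, is that this is the image under the projection of the process $t\mapsto \int_0^t g(s)\,dW_s$: writing $u_t = \int_0^1 \mathbf{1}_{[0,t]}(s) g(s)\,dW_s$, the Gaussian family $\{u_t\}$ has the same covariance structure as the $H^1$-functional obtained by applying $\mathcal{P}^W$ to the element $k_t\in H^1$ with $k_t'(s) = \mathbf{1}_{[0,t]}(s)g(s)$, so that $\mathcal{N}_{g,0}(\mathcal{P}^W_n) \to \sup_t |u_t| = \|\int_0^1 g\,dW\|_{0}$ and $\mathcal{N}_{g,\alpha}(\mathcal{P}^W_n)\to [\int_0^1 g\,dW]_\alpha$ in probability, by the standard fact that $\mathcal{P}_n^W$-projections of a fixed Cameron–Martin-type functional converge and by continuity of the Hölder norm functionals on $C([0,1])$. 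This gives $\tilde{\mathcal{N}}_{g}^{(k)} \uparrow \|\int_0^1 g\,dW\|_\alpha =: \tilde{\mathcal{N}}_g$, and each $\mathcal{N}_g^{(k)}$ is measurable as a finite sup of measurable seminorms (each $h\mapsto |\int_0^t gh'|$ is a continuous linear functional, hence trivially measurable in the sense of Definition~\ref{definition 2.2}).

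It remains to check the nondegeneracy condition $\mathbb{P}(\tilde{\mathcal{N}}_g \le \epsilon) > 0$ for every $\epsilon>0$; this is precisely where Lemma~\ref{lemma 2.7} (and through it Theorem~\ref{theorem 2.6}) enters, giving the quantitative lower bound $\mathbb{P}(\|\int_0^1 g\,dW\|_\alpha \le \epsilon) \ge e^{-c(\epsilon/M)^{-2/(1-2\alpha)}} > 0$, valid for $0<\alpha<\tfrac12$ since $g\in C_b^2$ is bounded; here one uses the componentwise bound $m \le g(t) \le M$ to dominate the time-changed integral by a multiple of $W$. With the nondecreasing structure, the existence and nondegeneracy of the probabilistic limit $\tilde{\mathcal{N}}_g$, and the pointwise existence of $\lim_k \mathcal{N}_g^{(k)} = \mathcal{N}_g$ on $H^1$ all verified, Lemma~\ref{lemma 2.2} yields that $\mathcal{N}_g$ is a measurable seminorm with associated random variable $\tilde{\mathcal{N}}_g = \|\int_0^1 g\,dW\|_\alpha$; the norm property was checked above. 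I expect the main obstacle to be the convergence-in-probability step: one must verify carefully that replacing the abstract Wiener-chaos element by the explicit stochastic integral $\int_0^t g\,dW$ is legitimate for the \emph{Hölder} seminorm $\mathcal{N}_{g,\alpha}$ — i.e. that $\mathcal{N}_{g,\alpha}(\mathcal{P}_n^W) \to [\int_0^1 g\,dW]_\alpha$ and not merely to some larger quantity — which requires the uniform (in $n$) Hölder tightness of the approximating processes, and this is exactly the point constraining $\alpha$ to the range $0<\alpha<\tfrac14$ in the applications even though the bare measurability argument runs for $\alpha<\tfrac12$.
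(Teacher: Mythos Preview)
Your proposal is correct and follows essentially the same route as the paper: decompose $\mathcal{N}_g$ into the sup-norm and H\"older-seminorm pieces, approximate each by an increasing sequence of finite suprema over dyadic points, identify the probabilistic limit with the corresponding functional of $\int_0^{\cdot} g\,dW$, invoke Lemma~\ref{lemma 2.7} for small-ball positivity, and conclude via Lemma~\ref{lemma 2.2}. Your closing worry is misplaced for this lemma---the H\"older convergence goes through for all $0<\alpha<\tfrac12$ because $\int_0^{\cdot} g\,dW$ is a.s.\ $\beta$-H\"older for every $\beta<\tfrac12$; the paper's restriction $\alpha<\tfrac14$ enters only later, in the martingale/remainder estimate of Theorem~\ref{theorem 3.1}.
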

\begin{proof}
	According to the properties of norm and semi-norm, it suffices to show that $\mathcal{N}_{g, 0}$ is a measurable norm and $\mathcal{N}_{g, \alpha}(h)$ is a measurable semi-norm. Below, we will only prove that $\mathcal{N}_{g, 0}$ is a measurable norm, because the proof of $\mathcal{N}_{g, \alpha}$ is similar to $\mathcal{N}_g$. Fix $t \in [0, 1]$ and define the continuous linear functional $\varphi_t : H^1 \to \mathbb{R}$ as follows:
	\begin{displaymath}
		\varphi_t(h) = \int_{0}^{1} {g(s)h^{\prime}(s)} \,{\rm d}s.
	\end{displaymath}
	Then we can show that $\vert \varphi_t(\cdot) \vert$ represents a measurable norm. Define the measurable norms $\mathcal{N}_n(h) = \sup\limits_{0 \leq j \leq 2^n} \vert \varphi_{j2^{-n}}(h) \vert$. In addition, we have the following convergence regarding limits $n \to \infty$: 
	\begin{displaymath}
		\tilde{\mathcal{N}}_n = \sup\limits_{0 \leq j \leq 2^n} \bigg\vert \int_{0}^{j2^{-n}} {g(t)} \,{\rm d}W_t \bigg\vert \stackrel{\mathbb{P}}{\longrightarrow} \sup\limits_{0 \leq j \leq 1} \bigg\vert \int_{0}^{1} {g(t)} \,{\rm d}W_t \bigg\vert,
	\end{displaymath}
	and by lemma $\ref{lemma 2.7}$, we have
	\begin{displaymath}
		\mathbb{P} \bigg( \sup\limits_{0 \leq j \leq 1} \bigg\vert \int_{0}^{1} {g(t)} \,{\rm d}W_t \bigg\vert \leq \epsilon \bigg) > 0.
	\end{displaymath}
	According to Lemma $\ref{lemma 2.2}$, $\mathcal{N}_H(\cdot) = \lim\limits_{n \to \infty} \mathcal{N}_n(\cdot)$ is a measurable norm.
\end{proof}

\section{Onsager-Machlup functional for 1-dimensional SDEs}
In this section, we focus on the $1$-dimensional case, thus set $d = 1$ and consider $W$ to be a $1$-dimensional Brownian motion. We proceed to present our principal findings related to the Onsager-Machlup functional for equation $\eqref{1.1}$.
\begin{theorem}	\label{theorem 3.1}
	Assume that $X_t$ is a solution of equation $\eqref{1.1}$, the reference path $\varphi$ is a function such that $\varphi_t - x$ belongs to Cameron-Martin space $H^1$ and the following conditions hold:
	\begin{itemize}
		\item[(1)] $f \in C^2_b\left([0, 1] \times \mathbb{R}, \mathbb{R} \right)$ and $f$ is globally Lipschitz continuous with constant $L$ for the second variable;
		\item[(2)] $g \in C^2_b\left([0, 1], \mathbb{R} \right)$ and there exist $ M > m > 0$, such that $m \leq g(t) \leq M$ for any $ t \in [0,1]$.
	\end{itemize} 
	Then, if we use the Hölder norm $\Vert \cdot \Vert_{\alpha}$ with $ 0< \alpha < \frac{1}{4}$, the Onsager-Machlup functional of $X_t$ exists and is given by
	\begin{equation}\label{2}
		OM(\varphi, \dot{\varphi}) = \int_{0}^{1} {\left( \frac{\dot{\varphi}_t - f(t, \varphi_t)}{g(t)} \right)^2 } \,{\rm d}t + \int_{0}^{1} {\partial_x f(t ,\varphi_t)} \,{\rm d}t,
	\end{equation}
	where $\dot{\varphi}:=\frac{\mathrm{d} \varphi(t)}{\mathrm{d} t}$, $\partial_x f(t, \varphi_t) := \frac{\partial f(t, x)}{\partial x}|_{\varphi_t}$.
\end{theorem}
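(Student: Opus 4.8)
The plan is to strip the drift with a Girsanov transformation, which turns the tube estimate into one for the \emph{fixed} Gaussian tube $\{\,\|\int_0^{\cdot}g(s)\,\mathrm{d}W_s\|_\alpha\le\epsilon\,\}$, and then to expand the resulting Radon--Nikodym density around $\varphi$ and check that $-\tfrac12\,OM(\varphi,\dot\varphi)$ is the only contribution that survives as $\epsilon\to0$. First I would put $u_t:=(f(t,X_t)-\dot\varphi_t)/g(t)$; since $f$ is bounded, $g\ge m>0$ and $\dot\varphi\in L^2([0,1])$, the bound $\int_0^1 u_t^2\,\mathrm{d}t\le m^{-2}\int_0^1(\|f\|_\infty+|\dot\varphi_t|)^2\,\mathrm{d}t<\infty$ is \emph{deterministic}, so Novikov's condition is automatic and $\mathrm{d}\mathbb{Q}/\mathrm{d}\mathbb{P}=\exp\{-\int_0^1 u_t\,\mathrm{d}W_t-\tfrac12\int_0^1 u_t^2\,\mathrm{d}t\}$ defines a probability measure under which $\tilde W_t:=W_t+\int_0^t u_s\,\mathrm{d}s$ is a Brownian motion. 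Inserting this into \eqref{1.1} gives $\mathrm{d}X_t=\dot\varphi_t\,\mathrm{d}t+g(t)\,\mathrm{d}\tilde W_t$, hence, using $\varphi_0=x$, $Y_t:=X_t-\varphi_t=\int_0^t g(s)\,\mathrm{d}\tilde W_s$ under $\mathbb{Q}$. Therefore $C(\epsilon):=\mathbb{Q}(\|Y\|_\alpha\le\epsilon)=\mathbb{P}(\|\int_0^{\cdot}g(s)\,\mathrm{d}W_s\|_\alpha\le\epsilon)$ depends only on $g$ and is strictly positive by Lemma~\ref{lemma 2.7}, and, rewriting $\mathrm{d}\mathbb{P}/\mathrm{d}\mathbb{Q}$ through $\mathrm{d}W_t=\mathrm{d}\tilde W_t-u_t\,\mathrm{d}t$, we get with $\Omega_\epsilon:=\{\|Y\|_\alpha\le\epsilon\}$
\[
\mathbb{P}(\|X-\varphi\|_\alpha\le\epsilon)=\mathbb{E}_{\mathbb{Q}}\!\left[\exp\!\left\{\int_0^1 u_t\,\mathrm{d}\tilde W_t-\tfrac12\int_0^1 u_t^2\,\mathrm{d}t\right\}\mathbf{1}_{\Omega_\epsilon}\right].
\]

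Next I would expand around $\varphi$, using that on $\Omega_\epsilon$ one has $\sup_t|Y_t|\le\epsilon$. Taylor's formula gives $f(t,\varphi_t+Y_t)=f(t,\varphi_t)+\partial_x f(t,\varphi_t)Y_t+\rho_t$ with $|\rho_t|\le\tfrac12\|\partial_x^2 f\|_\infty Y_t^2$, so $u_t=p_t+q_tY_t+\rho_t/g(t)$ where $p_t=(f(t,\varphi_t)-\dot\varphi_t)/g(t)$ and $q_t=\partial_x f(t,\varphi_t)/g(t)$. In $\tfrac12\int_0^1 u_t^2$, every term but $\tfrac12\int_0^1 p_t^2\,\mathrm{d}t$ carries a factor $Y_t$ or $\rho_t$ and is thus bounded pathwise on $\Omega_\epsilon$ by $C\epsilon$ (here $\dot\varphi\in L^1$ is used). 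In $\int_0^1 u_t\,\mathrm{d}\tilde W_t$ the leading pieces $\int_0^1\frac{f(t,\varphi_t)}{g(t)}\,\mathrm{d}\tilde W_t$ and $-\int_0^1\frac{\dot\varphi_t}{g(t)}\,\mathrm{d}\tilde W_t$ are Wiener integrals with $L^2$ integrands, and the remaining pieces I would collapse using $\mathrm{d}Y_t=g(t)\,\mathrm{d}\tilde W_t$ and Itô's formula. Since $Y_t\,\mathrm{d}\tilde W_t=\tfrac1{2g(t)}\mathrm{d}(Y_t^2)-\tfrac12 g(t)\,\mathrm{d}t$,
\[
\int_0^1 q_tY_t\,\mathrm{d}\tilde W_t=\tfrac12\int_0^1\frac{\partial_x f(t,\varphi_t)}{g(t)^2}\,\mathrm{d}(Y_t^2)-\tfrac12\int_0^1\partial_x f(t,\varphi_t)\,\mathrm{d}t,
\]
and a Stieltjes integration by parts (valid because $t\mapsto\partial_x f(t,\varphi_t)/g(t)^2$ has an $L^1$, hence bounded-variation, derivative) bounds the first term by $C\epsilon^2$ on $\Omega_\epsilon$; likewise, writing $\tfrac{\rho_t}{g(t)}\mathrm{d}\tilde W_t=\partial_y G(t,Y_t)\,\mathrm{d}Y_t$ with $G(t,y)=\int_0^y g(t)^{-2}\rho(t,z)\,\mathrm{d}z$ and applying Itô, $\int_0^1\frac{\rho_t}{g(t)}\,\mathrm{d}\tilde W_t=G(1,Y_1)-\int_0^1\partial_t G(t,Y_t)\,\mathrm{d}t-\tfrac12\int_0^1\partial_y^2 G(t,Y_t)g(t)^2\,\mathrm{d}t$, where $\rho(t,y)=O(y^2)$, $\partial_y^2 G(t,y)=O(|y|)$ and $\partial_t G(t,y)=O(|y|^3)(1+|\dot\varphi_t|)$ — so each of these three is bounded pathwise on $\Omega_\epsilon$ by $C\epsilon$. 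Collecting everything and noting that $\int_0^1 p_t^2\,\mathrm{d}t+\int_0^1\partial_x f(t,\varphi_t)\,\mathrm{d}t$ is exactly the right-hand side of \eqref{2}, we arrive at
\[
\frac{\mathrm{d}\mathbb{P}}{\mathrm{d}\mathbb{Q}}=\exp\!\left\{-\tfrac12\,OM(\varphi,\dot\varphi)+\int_0^1\frac{f(t,\varphi_t)}{g(t)}\,\mathrm{d}\tilde W_t-\int_0^1\frac{\dot\varphi_t}{g(t)}\,\mathrm{d}\tilde W_t+\mathcal{R}\right\},
\]
where $\mathcal{R}$ is a random variable with $|\mathcal{R}|\le C(f,g,\varphi)\,\epsilon$ on $\Omega_\epsilon$.

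To conclude, dividing by $C(\epsilon)\,e^{-\frac12 OM(\varphi,\dot\varphi)}$ reduces Theorem~\ref{theorem 3.1} to proving
\[
\mathbb{E}_{\mathbb{Q}}\!\left[\exp\!\left\{\int_0^1\frac{f(t,\varphi_t)}{g(t)}\,\mathrm{d}\tilde W_t-\int_0^1\frac{\dot\varphi_t}{g(t)}\,\mathrm{d}\tilde W_t+\mathcal{R}\right\}\ \Big|\ \Omega_\epsilon\right]\longrightarrow 1\qquad(\epsilon\to0).
\]
By Lemma~\ref{lemma 2.8}, $\mathcal{N}_g$ is a measurable norm with $\tilde{\mathcal{N}}_g=\|\int_0^{\cdot}g(s)\,\mathrm{d}\tilde W_s\|_\alpha$, so $\Omega_\epsilon=\{\tilde{\mathcal{N}}_g\le\epsilon\}$; since $f(\cdot,\varphi_\cdot)/g(\cdot)$ is bounded and $\dot\varphi/g\in L^2([0,1])$, Theorem~\ref{theorem 2.4} (applied with an arbitrary multiplicative constant) shows the conditional exponential moments of $\pm c$ times each of the two Wiener integrals tend to $1$, while $|\mathcal{R}|\le C\epsilon$ on $\Omega_\epsilon$ gives $\mathbb{E}_{\mathbb{Q}}[e^{\pm c\mathcal{R}}\mid\Omega_\epsilon]\to1$. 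Lemma~\ref{lemma 2.3} (plus a Cauchy--Schwarz lower bound, to upgrade $\limsup\le1$ to a genuine limit) then gives the displayed convergence, which is the assertion $\mathbb{P}(\|X-\varphi\|_\alpha\le\epsilon)\propto C(\epsilon)\exp\{-\tfrac12\int_0^1 OM(t,\varphi,\dot\varphi)\,\mathrm{d}t\}$.

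The main obstacle is the expansion step: the combinatorics must be arranged so that the unique surviving non-random term is precisely $-\tfrac12\,OM$, which forces one to dispose of the second-chaos term $\int_0^1 q_tY_t\,\mathrm{d}\tilde W_t$ and the quadratic stochastic remainder $\int_0^1\frac{\rho_t}{g(t)}\,\mathrm{d}\tilde W_t$, neither of which is pathwise controlled by $\|Y\|_\alpha$ alone. The device that makes this work is to convert these Itô integrals, via $\mathrm{d}Y_t=g(t)\,\mathrm{d}\tilde W_t$ and Itô's formula, into Stieltjes integrals against $\mathrm{d}(Y_t^2)$ and $\mathrm{d}(Y_t^3)$, which after integration by parts are genuinely of order $\epsilon$ on $\Omega_\epsilon$; the one delicate point is that $\varphi$ is only absolutely continuous, so each coefficient differentiated in $t$ (such as $t\mapsto\partial_x f(t,\varphi_t)/g(t)^2$ and $t\mapsto G(t,y)$) must be verified to have an $L^1$ time-derivative, with no term ever requiring $\ddot\varphi$.
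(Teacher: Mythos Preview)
Your argument is correct, but the route differs from the paper's in a substantive way. The paper handles the second--chaos piece $\int_0^1\frac{\partial_x f(t,\varphi_t)}{g(t)}A_t\,\mathrm{d}W_t$ by writing it as a double Wiener integral and invoking Theorem~\ref{theorem 2.5} (the nuclear--operator/trace result), which produces the term $-\tfrac12\int_0^1\partial_x f(t,\varphi_t)\,\mathrm{d}t$; and it disposes of the Taylor remainder $\int_0^1\frac{R_t}{g(t)}\,\mathrm{d}W_t$ by the exponential martingale inequality combined with the small--ball lower bound of Lemma~\ref{lemma 2.7}, which is exactly where the restriction $\alpha<\tfrac14$ enters (one needs $\epsilon^{-4}$ to dominate $\epsilon^{-2/(1-2\alpha)}$). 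You instead use the identity $Y_t\,\mathrm{d}\tilde W_t=\tfrac{1}{2g(t)}\mathrm{d}(Y_t^2)-\tfrac12 g(t)\,\mathrm{d}t$ and It\^o's formula on a primitive $G(t,\cdot)$ to convert \emph{both} of these stochastic integrals into boundary terms and ordinary integrals that are pathwise $O(\epsilon)$ on $\Omega_\epsilon$; the divergence term then drops out algebraically rather than via a trace computation. This is the classical It\^o--trick route (in the spirit of Ikeda--Watanabe); it is more elementary, never calls Theorem~\ref{theorem 2.5}, and---since the remainder is controlled pathwise rather than through a martingale/small--ball comparison---would in fact go through for every $0<\alpha<\tfrac12$, not only $\alpha<\tfrac14$. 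What the paper's abstract--Wiener--space approach buys is portability: the same template (measurable norm $+$ Theorem~\ref{theorem 2.4} $+$ Theorem~\ref{theorem 2.5}) transfers to other Gaussian drivers such as fractional Brownian motion, whereas your It\^o conversions rely on $Y$ being a Brownian semimartingale. One small correction: with $f\in C_b^2$ only, you get $\partial_t G(t,y)=O(|y|^2)(1+|\dot\varphi_t|)$ rather than $O(|y|^3)$, because the second--order Taylor remainder of $\partial_1 f$ in the spatial variable is only $o(|y|)$; this weaker bound is still ample for your $\mathcal R=O(\epsilon)$ conclusion.
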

\begin{proof}
	Let $Y_t$ be the solution for following stochastic integral equation:
	\begin{displaymath}
		Y_t = \varphi_t + \int_{0}^{t} {g(s)} \,{\rm d}W_s,
	\end{displaymath}
	where $\varphi_t \in H^1_{x_0}$. For convenience, denote $A_t := \int_{0}^{t} {g(s)} \,{\rm d}W_s$. Firstly, let's consider the relationship between ${\mathbb{P}\left(\Vert A\Vert \leq \epsilon\right)}$ and ${\mathbb{P}\left(\Vert W\Vert \leq \epsilon\right)}$. Under condition $(2)$, by Lemma $\ref{lemma 2.7}$ we have
	\begin{displaymath}
		\lim\limits_{\epsilon \to 0}\frac{\mathbb{P} \left(\Vert W \Vert \leq \frac{\epsilon}{M} \right)}{\mathbb{P}\left(\Vert W\Vert \leq \epsilon\right)}
		\leq \lim\limits_{\epsilon \to 0}\frac{\mathbb{P}\left(\Vert A\Vert \leq \epsilon\right)}{\mathbb{P}\left(\Vert W\Vert \leq \epsilon\right)} 
		\leq \lim\limits_{\epsilon \to 0}\frac{\mathbb{P} \left(\Vert W \Vert \leq \frac{\epsilon}{m} \right) }{\mathbb{P}\left(\Vert W\Vert \leq \epsilon\right)}.
	\end{displaymath}
	Applying Theorem $\ref{theorem 2.6}$, we can show that the ratio of ${\mathbb{P}\left(\Vert A\Vert \leq \epsilon\right)}$ to ${\mathbb{P}\left(\Vert W\Vert \leq \epsilon\right)}$ is bounded. Define
	\begin{equation}\label{3}
		\lim\limits_{\epsilon \to 0} \frac{\mathbb{P}\left(\Vert A\Vert \leq \epsilon\right)}{\mathbb{P}\left(\Vert W\Vert \leq \epsilon\right)} = C_{\frac{A}{W}},
	\end{equation}
	where $C_{\frac{A}{W}}$ is a constant that is related to $g(t)$ and $\alpha$.
	
	Due to $f \in C^2_b([0, 1] \times \mathbb{R}, \mathbb{R})$, $g \in C^2_b([0, 1], \mathbb{R})$ and $ \varphi_t \in H^1$, the Novikov condition $\mathbb{E}^{\mathbb{P}}\left( \mathrm{exp}\left\{ \int_{0}^{1} {(f - \dot{\varphi})} \,{\rm d}t \right\} \right) < \infty$ is clearly satisfied. Girsanov theorem implies that $\tilde{W}_t = W_t - \int_{0}^{t} {\frac{f(s, Y_s) - \dot{\varphi}_s}{g(s)} } \,{\rm d}s$ is a $1$-dimensional Brownian motion under new probability $\tilde{\mathbb{P}}$, which defined by $\frac{\tilde{\mathbb{P}}}{\mathbb{P}} := \mathcal{R}$ with
	\begin{displaymath}
		\mathcal{R} := {\rm exp} \left\{ {\int_{0}^{1} {\frac{f(t, Y_t) - \dot{\varphi}_t}{g(t)} } \,{\rm d}W_t - \frac{1}{2}\int_{0}^{1} {(\frac{f(t, Y_t) - \dot{\varphi}_t}{g(t)})^2 } \,{\rm d}t} \right\}.
	\end{displaymath}
	So based on Lemmas $\ref{L2.7}$ and $\ref{L2.8}$, we have the following result:
	\begin{equation}\label{4}
		\begin{aligned}
			\frac{\mathbb{P}\left(\Vert X_t -\varphi_t \Vert \leq \epsilon\right)}{\mathbb{P}\left(\Vert W\Vert \leq \epsilon\right)} 
			& = \frac{\tilde{\mathbb{P}}\left(\Vert Y_t -\varphi_t\Vert \leq \epsilon\right)}{\mathbb{P}\left(\Vert W\Vert \leq \epsilon\right)} 
			= \frac{\mathbb{E} \left( \mathcal{R}\mathbb{I}_{\Vert A\Vert \leq \epsilon} \right)}{\mathbb{P}\left(\Vert W\Vert \leq \epsilon\right)}
			\\& = \frac{\mathbb{E} \left( \mathcal{R}\mathbb{I}_{\Vert A\Vert \leq \epsilon} \right)}{\mathbb{P}\left(\Vert A\Vert \leq \epsilon\right)} \times
			\frac{\mathbb{P}\left(\Vert A\Vert \leq \epsilon\right)}{\mathbb{P}\left(\Vert W\Vert \leq \epsilon\right)}
			\\& = \mathbb{E}\left( \mathcal{R} \big| \Vert A\Vert \leq \epsilon \right) \times \frac{\mathbb{P}\left(\Vert A\Vert \leq \epsilon\right)}{\mathbb{P}\left(\Vert W\Vert \leq \epsilon\right)}
			\\& = \mathbb{E} \bigg( {\rm exp}\left\{ {\int_{0}^{1} {\frac{f(t, Y_t) - \dot{\varphi}_t}{g(t)} } \,{\rm d}W_t - \frac{1}{2}\int_{0}^{1} {\left(\frac{f(t, Y_t) - \dot{\varphi}_t}{g(t)}\right)^2 } \,{\rm d}t}  \right\} 
			\\ & \qquad   \big| \Vert A\Vert \leq \epsilon  \bigg) \times
			\frac{\mathbb{P}\left(\Vert A\Vert \leq \epsilon\right)}{\mathbb{P}\left(\Vert W\Vert \leq \epsilon\right)}
			\\& =  \mathbb{E} \left( {\rm exp}\left\{ { \sum_{i - 1}^{4} B_i} \right\} \big| \Vert A\Vert \leq \epsilon \right)
			\\& \quad \times {\rm exp}\left\{ { -\frac{1}{2} \int_{0}^{1} {\left( \frac{\dot{\varphi}_t - f(t, \varphi_t)}{g(t)} \right)^2 } \,{\rm d}t} \right\} \times   
			\frac{\mathbb{P}\left(\Vert A\Vert \leq \epsilon\right)}{\mathbb{P}\left(\Vert W\Vert \leq \epsilon\right)},
		\end{aligned}
	\end{equation}
	where
	\begin{align*}
		B_1 &= \int_{0}^{1} {\frac{f(t, Y_t)}{g(t)}} \,{\rm d}W_t,
		\\ B_2 &= - \int_{0}^{1} {\frac{\dot{\varphi}_t}{g(t)} } \,{\rm d}W_t,
		\\ B_3 &= \frac{1}{2} \int_{0}^{1} {\left(\frac{f(t, \varphi_t)}{g(t)} \right)^2 } \,{\rm d}t
		- \frac{1}{2} \int_{0}^{1} {\left(\frac{f(t, Y_t) }{g(t)} \right)^2}  \,{\rm d}t,
		\\ B_4 &= \int_{0}^{1} { \frac{(f(t, Y_t) - f(t, \varphi_t)) \dot{\varphi}_t}{(g(t))^2}  } \,{\rm d}t.
	\end{align*}
	
	For the second term $B_2$, applying Theorem $\ref{theorem 2.4}$ to $f := -c\frac{\dot{\varphi}_t}{g(t)}$ and Lemma $\ref{lemma 2.8}$, we have
	\begin{equation}\label{5}
		\limsup\limits_{\epsilon \to 0} \mathbb{E}\left({\rm exp}\left\{ cB_2 \right\} \big|\Vert A \Vert < \epsilon \right) = 1
	\end{equation}
	for all $c \in \mathbb{R}$.
	
	For the third term $B_3$, 
	\begin{displaymath}
		\begin{aligned}
			B_3 &= \frac{1}{2} \int_{0}^{1} { \frac{\left(f(t, \varphi_t) \right)^2 - \left(f(t, Y_t)  \right)^2 }{\left( g(t) \right)^2}} \,{\rm d}t
			\\ &= \frac{1}{2} \int_{0}^{1} { \frac{\left(f(t, \varphi_t) - f(t, Y_t)  \right)^2 + 2 \left(f(t, \varphi_t) - f(t, Y_t)\right) f(t, Y_t) }{\left( g(t) \right)^2}} \,{\rm d}t
			\\ &\leq  \frac{1}{2} \int_{0}^{1} { \frac{\left(f(t, \varphi_t) - f(t, Y_t)  \right)^2 }{\left( g(t) \right)^2}} \,{\rm d}t +  \int_{0}^{1} { \frac{ \vert \left(f(t, \varphi_t) - f(t, Y_t)\right) f(t, Y_t)\vert }{\left( g(t) \right)^2}} \,{\rm d}t.
		\end{aligned}
	\end{displaymath}
	Using that $f$ is Lipschitz continuous for the second variable, we have
	\begin{displaymath}
		\vert f(t, Y_t) - f(t, \varphi_t) \vert = \vert f(t, \varphi_t + A_t) - f(t, \varphi_t) \Vert \leq L \Vert A_t \Vert.
	\end{displaymath}
	When $\Vert A\Vert \leq \frac{\epsilon}{C_{\frac{A}{W}}}$, we obtain
	\begin{equation}\label{6}
		\vert f(t, Y_t) - f(t, \varphi_t) \vert  \leq \frac{L }{C_{\frac{A}{W}}} \epsilon.
	\end{equation}
	Inequality $\eqref{6}$ and the boundedness of $f$ and $g$ imply that
	\begin{equation}\label{7}
		\limsup\limits_{\epsilon \to 0} \mathbb{E}\left({\rm exp}\left\{ cB_3 \right\} \big|\Vert A \Vert < \epsilon \right) = 1
	\end{equation}
	for all $c \in \mathbb{R}$.
	
	For the fourth term $B_4$, applying inequality $\eqref{6}$ and the boundedness of $\dot{\varphi}_t$, we have
	\begin{equation}\label{8}
		\limsup\limits_{\epsilon \to 0} \mathbb{E}\left({\rm exp}\left\{ cB_4 \right\} \big|\Vert A \Vert_{\alpha} < \epsilon \right) = 1
	\end{equation}
	for all $c \in \mathbb{R}$.
	
	For the first term $B_1$, applying Taylor expansion, we have
	\begin{displaymath}
		\begin{aligned}
			f(t ,Y_t) &= f(t ,\varphi_t + A_t) = f(t ,\varphi_t) + \frac{\partial f(t, x)}{\partial x}|_{\varphi_t} A_t + R_t
			\\&= f(t ,\varphi_t) + \partial_2 f(t, \varphi_t) A_t + R_t,
		\end{aligned}
	\end{displaymath}
	and when $\Vert A\Vert \leq \frac{\epsilon}{C_{\frac{A}{W}}}$, there exists a constant $k$ such that
	\begin{displaymath}
		\sup\limits_{t \in [0, 1]} \Vert R_t \Vert \leq k \epsilon^2.
	\end{displaymath}
	Hence $B_1$ can be written
	\begin{displaymath}
		\begin{aligned}
			B_1 &= \int_{0}^{1} {\frac{f(t, \varphi_t + A_t)}{g(t)}} \,{\rm d}W_t = \int_{0}^{1} {\frac{f(t ,\varphi_t) + \partial_x f(t ,\varphi_t)A_t + R_t}{g(t)}} \,{\rm d}W_t
			\\ &:= C_1 + C_2 + C_3.
		\end{aligned}
	\end{displaymath}
	Applying Theorem $\ref{theorem 2.4}$ to $f(t) = \frac{f(t ,\varphi_t) }{g(t)} $ and Lemma $\ref{lemma 2.8}$, we have
	\begin{equation}\label{9}
		\limsup\limits_{\epsilon \to 0} \mathbb{E}\left({\rm exp}\left\{ cC_1 \right\} \big|\Vert A \Vert_{\alpha} < \epsilon \right) = 1
	\end{equation}
	for all $c \in \mathbb{R}$.
	In order to apply Theorem $\ref{theorem 2.5}$ we will express the term $C_2$ as a double stochastic integral with respect to $W$. We have
	\begin{displaymath}
		\begin{aligned}
			C_2 &= \int_{0}^{1} {\frac{ \partial_x f(t ,\varphi_t)A_t }{g(t)}} \,{\rm d}W_t 
			= \int_{0}^{1} {\int_{0}^{t} {\frac{ \partial_x f(t ,\varphi_t)g(s) }{g(t)}} \,{\rm d}W_s} \,{\rm d}W_t
			\\ &= \int_{0}^{1} {\int_{0}^{1} {\frac{ \partial_x f(t ,\varphi_t)g(s) 1_{s \leq t} }{g(t)}} \,{\rm d}W_s} \,{\rm d}W_t.
		\end{aligned}
	\end{displaymath}
	Define
	\begin{displaymath}
		F(s,t) := \frac{ \partial_x f(t ,\varphi_t)g(s) 1_{s \leq t} }{g(t)}.
	\end{displaymath}
	Hence, $C_2 = \int_{0}^{1} {\int_{0}^{1} {\tilde{F}} \,{\rm d}W_s} \,{\rm d}W_t$, where $\tilde{F} := \frac{1}{2} (F + F^{*})$ is the symmetrization of $F$. The operator $K(\tilde{F})$ is nuclear, and its trace can be computed as follows:
	\begin{displaymath}
		Tr.\tilde{F} = Tr.{F} = \frac{1}{2} \int_{0}^{1} {{F}(t, t)} \,{\rm d}t = \frac{1}{2} \int_{0}^{1} {\partial_x f(t ,\varphi_t)} \,{\rm d}t.
	\end{displaymath}
	By Theorem $\ref{theorem 2.5}$ and Lemma $\ref{lemma 2.8}$, we have
	\begin{equation}\label{10}
		\limsup\limits_{\epsilon \to 0} \mathbb{E}\left({\rm exp}\left\{ cC_2 \right\} \big|\Vert A \Vert < \epsilon \right) = {\rm exp} \left( - \frac{1}{2} \int_{0}^{1} {\partial_x f(t ,\varphi_t)} \,{\rm d}t \right)
	\end{equation}
	for all $c \in \mathbb{R}$.
	Finally, we study the behaviour of the term $C_3$. For any $c \in \mathbb{R}$ and $\delta > 0$, we have
	\begin{equation}\label{11}
		\begin{aligned}
			\mathbb{E} \left( {\rm exp} \left\{ cC_3 \right\}\big| \Vert A \Vert \leq \epsilon \right) 
			&= \int_{0}^{\infty} {e^x \mathbb{P}\left( \left| c\int_{0}^{1} {R_t} \,{\rm d}W_t  \right| > x \big| \Vert A \Vert \leq \epsilon \right)} \,{\rm d}x
			\\ &\leq \int_{\delta}^{\infty} {e^x \mathbb{P}\left( \left| c\int_{0}^{1} {R_t} \,{\rm d}W_t \right| > x \big| \Vert A \Vert \leq \epsilon \right)} \,{\rm d}x 
			\\ &\quad + e^{\delta} \mathbb{P}\left( \left| c\int_{0}^{1} {R_t} \,{\rm d}W_t \right| > \delta \big| \Vert A \Vert \leq \epsilon \right).
		\end{aligned}
	\end{equation}
	Define the martingale $M_t = c\int_{0}^{t} {R_s} \,{\rm d}W_s$. We have estimate about its quadratic variation
	\begin{displaymath}
		\langle M_t \rangle = c^2\int_{0}^{t} {\Vert R_s \Vert^2} \,{\rm d}s \leq C \epsilon^4
	\end{displaymath}
	for some $C > 0$. Using the exponential inequality for martingales we have
	\begin{displaymath}
		\mathbb{P}\left( \left| c\int_{0}^{1} {R_t} \,{\rm d}W_t \right| > x, \Vert A \Vert \leq \epsilon \right) \leq {\rm exp}\left\{ -\frac{x^2}{2c\epsilon^4} \right\}.
	\end{displaymath}
	Then, by Lemma $\ref{lemma 2.7}$, we have
	\begin{displaymath}
		\mathbb{P}\left( \left| c\int_{0}^{1} {R_t} \,{\rm d}W_t \right| > x \big| \Vert A \Vert \leq \epsilon \right) \leq {\rm exp}\left\{ -\frac{x^2}{2c\epsilon^4} \right\}
		{\rm exp}\left\{ c \left( \frac{\epsilon}{M} \right)^{-\frac{2}{1 - 2\alpha}} \right\}.
	\end{displaymath}
	Applying the latter estimate and taking the limit in equation \(\eqref{11}\), we obtain that for \( 0 < \alpha < \frac{1}{4} \), the inequality \( \frac{2}{1 - 2\alpha} < 4 \) holds. Therefore, we conclude that
	\begin{equation}\label{12}
		\limsup\limits_{\epsilon \to 0} \mathbb{E}\left({\rm exp}\left\{ cC_3 \right\} \big| \Vert A \Vert \leq \epsilon \right) = 1
	\end{equation}
	for all $c \in \mathbb{R}$, as $\epsilon \to 0$ and $\delta \to 0$.
	
	Finally, by Lemma $\ref{lemma 2.3}$ and formulas $\eqref{3}$ - $\eqref{5}$, $\eqref{7}$ - $\eqref{10}$ and $\eqref{12}$, we have
	\begin{displaymath}
		\begin{aligned}
			\lim\limits_{\epsilon \to 0} \frac{\mathbb{P}\left(\Vert X_t -\varphi_t \Vert \leq \epsilon\right)}{\mathbb{P}\left(\Vert W\Vert \leq \epsilon\right)} = C_{\frac{A}{W}} {\rm exp} \left\{ -\frac{1}{2} \int_{0}^{1} {\left( \frac{\dot{\varphi}_t - f(t, \varphi_t)}{g(t)} \right)^2 } \,{\rm d}t - \frac{1}{2} \int_{0}^{1} {\partial_x f(t ,\varphi_t)} \,{\rm d}s \right\}.	
		\end{aligned}\notag
	\end{displaymath}
Consequently,
	\begin{displaymath}
		OM(\varphi, \dot{\varphi}) = \int_{0}^{1} {\left( \frac{\dot{\varphi}_t - f(t, \varphi_t)}{g(t)} \right)^2 } \,{\rm d}t + \int_{0}^{1} {\partial_x f(t ,\varphi_t)} \,{\rm d}t.
	\end{displaymath}
\end{proof}

\section{Onsager-Machlup functional for $n$-dimensional SDEs}
In this section, our exploration extends to the $n$-dimensional scenario, with $W_t := (W^1_t, W^2_t,..., W^n_t)^T$ representing an $n$-dimensional Brownian motion. For functions $f \in C^2_b\left(D, \mathbb{R}^n \right), g \in C^2_b\left(D, \mathbb{R}^{n \times n} \right)$, we introduce the following definitions for spaces and norms:
\begin{equation}
	\begin{aligned}
		f &:= \sum\limits_{i = 1}^{n} f^{i} e_i,
		\quad \Vert f \Vert = \frac{1}{n} \sum\limits_{i = 1}^{n} \Vert f^{i} \Vert,
		\\ g &:= \sum\limits_{i,j = 1}^{n} g^{(i,j)} e_i \otimes e_j,
		\quad \Vert g \Vert = \frac{1}{n^2} \sum\limits_{i,j = 1}^{n} \Vert g^{(i,j)} \Vert,
	\end{aligned}\notag
\end{equation}
where $f^{i}, g^{(i,j)} \in  C^2_b\left(D, \mathbb{R} \right)$, $\left\{ e_i \right\}_{1 \leq i \leq n}$ is a set of orthonormal basis in space $\mathbb{R}^n$ and $\otimes$ denotes the tensor product. Additionally, we assume that $\langle X_1, X_2 \rangle$ represents the inner product between vectors $X_1$ and $X_2$, while $ M \cdot X$ represents the product between matrices $M$ and vectors (or matrices) $X$.

To elaborate on the Onsager-Machlup functional for equation $\eqref{1.1}$, it becomes necessary to further define $f$ and $g$.

\begin{definition} We define some conditions regarding $f$ and $g$:
	\begin{itemize}
		\item[(C1)]$f \in C^2_b\left(\left[0, 1 \right] \times \mathbb{R}^n, \mathbb{R}^n \right)$ and $f$ is globally Lipschitz continuous with constant $L$ for the second variable;
		\item[(C2)] $g \in C^2_b\left(\left[0, 1 \right], \mathbb{R}^{n \times n} \right)$, and $g$ is positive and bounded, that is, there existing $ M > m > 0$, such that $m \leq \mathrm{det} \left( g \left( t \right) \right) \leq M$ for any $ t \in [0,1]$;
		\item[(C3)] $g(t)^{-1}$, generalized inverse matrix  of $g(t)$, is continuous for any $ t \in [0,1]$.
	\end{itemize}
\end{definition}
\begin{definition} 
	Assumming $g : \left[0, 1 \right] \to \mathbb{R}^{n \times n}$ is nondegenerate, that is, $\mathrm{det} \left( g \left( t \right) \right) \neq 0$ for any $ t \in \left[0,1 \right] $, we say that $h : \left[0, 1 \right] \to \mathbb{R}^{n \times n}$ is the generalized inverse matrix of the function matrix $g$, if $h \cdot g = g \cdot h = I^{n \times n}$ for any $ t \in \left[0,1 \right]$. We will denote the generalized inverse matrix $h$ as $g^{-1}$.
\end{definition}
\begin{theorem}	\label{theorem 4.3}
	Assume that $X_t$ is a solution of equation $\eqref{1.1}$, the reference path $\varphi$ is a function such that $\varphi_t - x$ belongs to Cameron-Martin $H^1$. And assume that $f$ and $g$ satisfy continuous $(C1)$, $(C2)$ and $(C3)$. If we use the Hölder norm $\Vert \cdot \Vert_{\alpha}$ with $ 0< \alpha < \frac{1}{4}$, then the Onsager-Machlup functional of $X_t$ exists and has the form
	\begin{equation}\label{13}
		OM(\varphi, \dot{\varphi}) = \int_{0}^{1} \left| {g(t)}^{-1} \cdot \left({\dot{\varphi}_t - f(t, \varphi_t)} \right)\right|^2 \,{\rm d}t + \int_{0}^{1} {{\rm div}^{g}_{x} f(t ,\varphi_t)} \,{\rm d}t,
	\end{equation}
	where $\dot{\varphi}:=\frac{\mathrm{d} \varphi(t)}{\mathrm{d} t}$, ${\rm div}^{g}_{x} f(t, \varphi_t) := Tr.\left({g(t)^{-1}}{\nabla f(t ,\varphi_t)}g(t) \right)$.
\end{theorem}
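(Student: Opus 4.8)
The plan is to mirror the structure of the proof of Theorem \ref{theorem 3.1}, replacing scalar quantities by their matrix analogues and using the generalized inverse $g(t)^{-1}$ provided by $(C3)$. First I would introduce the auxiliary process $Y_t = \varphi_t + A_t$ with $A_t := \int_0^t g(s)\,\mathrm{d}W_s$, and relate $\mathbb{P}(\Vert A\Vert \leq \epsilon)$ to $\mathbb{P}(\Vert W\Vert \leq \epsilon)$ via a vector-valued analogue of Lemma \ref{lemma 2.7}: condition $(C2)$ bounds $\det(g(t))$ away from $0$ and $\infty$, which together with continuity of $g$ and $g^{-1}$ gives uniform spectral bounds $m' \leq \vert g(t)x\vert/\vert x\vert \leq M'$, so the ball probabilities are comparable componentwise, and the limit $C_{A/W} := \lim_{\epsilon\to 0}\mathbb{P}(\Vert A\Vert\leq\epsilon)/\mathbb{P}(\Vert W\Vert\leq\epsilon)$ exists and is a positive finite constant by Theorem \ref{theorem 2.6}. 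Then, since $f$ is bounded and $\dot\varphi \in H$, the Novikov condition holds and the Girsanov theorem gives a new measure $\tilde{\mathbb{P}}$ under which $\tilde W_t = W_t - \int_0^t g(s)^{-1}(f(s,Y_s)-\dot\varphi_s)\,\mathrm{d}s$ is an $n$-dimensional Brownian motion, with Radon-Nikodym density $\mathcal{R} = \exp\{\int_0^1 \langle g(t)^{-1}(f(t,Y_t)-\dot\varphi_t), \mathrm{d}W_t\rangle - \tfrac12\int_0^1 \vert g(t)^{-1}(f(t,Y_t)-\dot\varphi_t)\vert^2\,\mathrm{d}t\}$.

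Next I would expand $\mathbb{P}(\Vert X-\varphi\Vert\leq\epsilon)/\mathbb{P}(\Vert W\Vert\leq\epsilon) = \mathbb{E}(\mathcal{R}\mid \Vert A\Vert\leq\epsilon)\cdot \mathbb{P}(\Vert A\Vert\leq\epsilon)/\mathbb{P}(\Vert W\Vert\leq\epsilon)$ exactly as in \eqref{4}, pulling out the deterministic factor $\exp\{-\tfrac12\int_0^1 \vert g(t)^{-1}(\dot\varphi_t - f(t,\varphi_t))\vert^2\,\mathrm{d}t\}$ by writing $\vert g^{-1}(f(t,Y_t)-\dot\varphi_t)\vert^2 = \vert g^{-1}(f(t,\varphi_t)-\dot\varphi_t)\vert^2 + (\text{cross and remainder terms})$. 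This splits the conditional expectation into a sum $\sum_i B_i$ of terms: a linear stochastic integral $-\int_0^1 \langle g(t)^{-1}\dot\varphi_t,\mathrm{d}W_t\rangle$ handled by Theorem \ref{theorem 2.4} and the measurability of $\mathcal{N}_g$ (here I would need a matrix version of Lemma \ref{lemma 2.8}, whose proof goes through componentwise); the quadratic-in-$f$ correction terms $B_3, B_4$, which are $O(\epsilon)$ on the event $\Vert A\Vert\leq\epsilon/C_{A/W}$ by the Lipschitz bound $\vert f(t,Y_t)-f(t,\varphi_t)\vert \leq L\vert A_t\vert$ and the boundedness of $f, g^{-1}, \dot\varphi$, hence contribute $1$ in the limit; and the main term $B_1 = \int_0^1 \langle g(t)^{-1}f(t,Y_t),\mathrm{d}W_t\rangle$, which I Taylor-expand as $f(t,Y_t) = f(t,\varphi_t) + \nabla f(t,\varphi_t)A_t + R_t$ with $\sup_t\vert R_t\vert \leq k\epsilon^2$. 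The term with $f(t,\varphi_t)$ is again linear and contributes $1$; the remainder term $R_t$ is controlled by the exponential martingale inequality combined with Lemma \ref{lemma 2.7}, and the restriction $0 < \alpha < \tfrac14$ is exactly what makes $\epsilon^{-4}\cdot\epsilon^{2/(1-2\alpha)} \to \infty$ slowly enough — or rather $\epsilon^{-2/(1-2\alpha)}$ negligible against $\epsilon^{-4}$ — so that term vanishes.

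The crux is the term $C_2 = \int_0^1 \langle g(t)^{-1}\nabla f(t,\varphi_t)A_t,\mathrm{d}W_t\rangle$, which I would rewrite, using $A_t = \int_0^t g(s)\,\mathrm{d}W_s$, as the double Itô integral $\int_0^1\int_0^1 \mathbf{1}_{s\leq t}\,\langle g(t)^{-1}\nabla f(t,\varphi_t)g(s)\,\mathrm{d}W_s, \mathrm{d}W_t\rangle$. In the $n$-dimensional setting this means the kernel $F(s,t)$ is an $n\times n$ matrix-valued function acting on the vector Brownian increments, so I must invoke the matrix (covariance-kernel) version of Theorem \ref{theorem 2.5} as stated in the Preliminaries, where $Tr$ becomes the operator trace of the induced Hilbert-Schmidt operator on $H = L^2([0,1],\mathbb{R}^n)$. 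Symmetrizing $F$ and computing $Tr.\tilde F = Tr.F = \int_0^1 \mathrm{Tr}\,F(t,t)\,\mathrm{d}t = \int_0^1 \mathrm{Tr}(g(t)^{-1}\nabla f(t,\varphi_t)g(t))\,\mathrm{d}t = \int_0^1 \mathrm{div}^g_x f(t,\varphi_t)\,\mathrm{d}t$ then yields the second term of \eqref{13} with the correct $\tfrac12$ factor. Finally, assembling everything via Lemma \ref{lemma 2.3} gives $\lim_{\epsilon\to 0}\mathbb{P}(\Vert X-\varphi\Vert\leq\epsilon)/\mathbb{P}(\Vert W\Vert\leq\epsilon) = C_{A/W}\exp\{-\tfrac12 \int_0^1 \vert g(t)^{-1}(\dot\varphi_t-f(t,\varphi_t))\vert^2\,\mathrm{d}t - \tfrac12\int_0^1 \mathrm{div}^g_x f(t,\varphi_t)\,\mathrm{d}t\}$, which is the asserted functional.

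I expect the main obstacle to be the careful bookkeeping in the matrix setting: verifying that $F(s,t) = g(t)^{-1}\nabla f(t,\varphi_t)g(s)\mathbf{1}_{s\leq t}$ (as a kernel of a vector-valued operator) satisfies the nuclearity hypothesis of the matrix version of Theorem \ref{theorem 2.5} — which needs the continuity of $g^{-1}$ from $(C3)$ and of $\nabla f$ and $g$, so that $F$ extends continuously to the diagonal and the trace formula applies — and ensuring the symmetrization does not alter the trace. A secondary technical point is establishing the vector-valued ball-probability comparison and the matrix analogue of Lemma \ref{lemma 2.8}; these are routine once one has the uniform spectral bounds on $g(t)$, but they must be stated carefully because $g(t)$ is a genuine matrix rather than a scalar.
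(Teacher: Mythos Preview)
Your proposal is correct and follows essentially the same approach as the paper: the paper also introduces $Y_t = \varphi_t + A_t$, applies Girsanov with density $\mathcal{R}$ involving $g(t)^{-1}$, splits the conditional expectation into the same four terms $B_1,\dots,B_4$, Taylor-expands $B_1$ into three pieces $B_{11},B_{12},B_{13}$, and identifies the divergence term via the matrix-valued kernel $F(s,t)=g(t)^{-1}\nabla f(t,\varphi_t)g(s)\mathbf{1}_{s\le t}$ and Theorem~\ref{theorem 2.5}. The technical points you flag (matrix version of Lemma~\ref{lemma 2.8}, nuclearity from the continuity of $g^{-1}$ via $(C3)$, and the $\alpha<\tfrac14$ restriction for the remainder martingale estimate) are exactly the ones the paper uses.
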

\begin{proof}
	Similar to the proof method of Theorem $\ref{theorem 3.1}$, let $Y_t$ be the solution for following stochastic integral equation:
	\begin{displaymath}
		Y_t = \varphi_t + \int_{0}^{t} {g(s)} \cdot \,{\rm d}W_s,
	\end{displaymath}
	where $\varphi_t \in H^1$. For convenience, denote $A_t := \int_{0}^{t} {g \left( s \right)} \cdot \,{\rm d}W_s$. In the $n$-dimensional case, we can verify that $\eqref{3}$ still holds.
	Due to $f \in C^2_b \left(\left[0, 1 \right] \times \mathbb{R}^n, \mathbb{R}^n \right)$, $g \in C^2_b \left(\left[0, 1 \right], \mathbb{R}^{n \times n} \right)$ and $ \varphi_t \in H^1$, the Novikov condition $\mathbb{E}^{\mathbb{P}}\left( \mathrm{exp}\left\{ \int_{0}^{1} {(f - \dot{\varphi})} \,{\rm d}t \right\} \right) < \infty$ is clearly satisfied. Girsanov theorem implies that $\tilde{W}_t = W_t - \int_{0}^{t} {{g(s)^{-1}} \cdot {\left(f \left(s, Y_s \right) - \dot{\varphi}_s \right)} } \,{\rm d}s$ is an $n$-dimensional Brownian motion under new probability $\tilde{\mathbb{P}}$, which defined by $\frac{\tilde{\mathbb{P}}}{\mathbb{P}} := \mathcal{R}$ with
	\begin{displaymath}
		\mathcal{R} := {\rm exp} \left\{ {\int_{0}^{1} \big\langle  {{g(t)^{-1}} \cdot {\left(f \left(t, Y_t \right) - \dot{\varphi}_t \right)} }, \,{\rm d}W_t \big\rangle - \frac{1}{2}\int_{0}^{1} {\left|{g \left( s \right)^{-1}}\cdot{\left( f \left(t, Y_t \right) - \dot{\varphi}_t \right)} \right|^2 } \,{\rm d}t} \right\}.
	\end{displaymath}
	So we have
	\begin{equation} \label{14}
		\begin{aligned}
			\frac{\mathbb{P}\left(\Vert X_t -\varphi_t \Vert \leq \epsilon\right)}{\mathbb{P}\left(\Vert W\Vert \leq \epsilon\right)} 
			& = \mathbb{E} \bigg( {\rm exp}  \left\{ {\int_{0}^{1} \big\langle  {{g(t)^{-1}} \cdot {\left(f \left(t, Y_t \right) - \dot{\varphi}_t \right)} }, \,{\rm d}W_t \big\rangle - \frac{1}{2}\int_{0}^{1} {\left| {g \left( s \right)^{-1}}\cdot{\left( f \left(t, Y_t \right) - \dot{\varphi}_t \right)} \right|^2 } \,{\rm d}t} \right\} 
			\\ & \qquad  \big| \Vert A\Vert \leq \epsilon \bigg) \times  
			\frac{\mathbb{P}\left(\Vert A\Vert \leq \epsilon\right)}{\mathbb{P}\left(\Vert W\Vert \leq \epsilon\right)}
			\\& =  \mathbb{E} \left( {\rm exp}\left\{ { \sum_{i - 1}^{4} B_i} \right\} \big| \Vert A\Vert \leq \epsilon \right) \cdot {\rm exp}\left\{ { -\frac{1}{2} \int_{0}^{1} {\left| {g(t)^{-1}}\cdot{\left(\dot{\varphi}_t - f(t, \varphi_t)\right)} \right|^2 } \,{\rm d}t} \right\} \times   
			\frac{\mathbb{P}\left(\Vert A\Vert \leq \epsilon\right)}{\mathbb{P}\left(\Vert W\Vert \leq \epsilon\right)}.
		\end{aligned}
	\end{equation}
	where
	\begin{align*}
		B_1 &= \int_{0}^{1} \big\langle {{g(t)^{-1}}\cdot{f(t, Y_t)}}, \,{\rm d}W_t \big\rangle,
		\\ B_2 &= - \int_{0}^{1} \big\langle {{g(t)^{-1}}\cdot{\dot{\varphi}_t} }, \,{\rm d}W_t \big\rangle,
		\\ B_3 &= \frac{1}{2} \int_{0}^{1} {\left|{g(t)^{-1}}\cdot {f(t, \varphi_t)} \right|^2 } \,{\rm d}t
		- \frac{1}{2} \int_{0}^{1} {\left| {g(t)^{-1}}\cdot {f(t, Y_t) } \right|^2}  \,{\rm d}t,
		\\ B_4 &= \int_{0}^{1} {\big\langle  {g(t)^{-2}}\cdot {(f(t, Y_t) - f(t, \varphi_t)), \dot{\varphi}_t} \big\rangle } \,{\rm d}t.
	\end{align*}
	
	Under conditions $(C2), (C3)$, we can show that $g(t)^{-1}$ is a positive, bounded and continuous function matrix. Note that $\dot{\varphi}_t \in L^2([0,1], \mathbb{R}^n)$. Let $\phi_t := {g(t)^{-1}}\cdot{\dot{\varphi}_t}$, we have $\phi_t := \left( \phi^1_t, \phi^2_t, ... , \phi^n_t  \right)^{T}  \in L^2([0,1], \mathbb{R}^n)$.
	
	For the second term $B_2$, we have
	\begin{equation}
		\begin{aligned}
			B_2 &= - \int_{0}^{1} \big\langle {{g(t)^{-1}}\cdot{\dot{\varphi}_t} }, \,{\rm d}W_t \big\rangle
			&= - \int_{0}^{1} \big\langle {\phi_t }, \,{\rm d}W_t \big\rangle
			&= - \sum_{i - 1}^{n}\int_{0}^{1}  {\phi^i_t } \,{\rm d}W^i_t.
		\end{aligned}\notag
	\end{equation}
	Applying Theorem $\ref{theorem 2.4}$ to $h = -c\phi^i_t$ and Lemma $\ref{lemma 2.3}$, Lemma $\ref{lemma 2.8}$, we obtain
	\begin{equation}\label{15}
		\limsup\limits_{\epsilon \to 0} \mathbb{E}\left({\rm exp}\left\{ cB_2 \right\} \big|\Vert A \Vert < \epsilon \right) = 1
	\end{equation}
	for all $c \in \mathbb{R}$.
	
	For the third term $B_3$, 
	\begin{displaymath}
		\begin{aligned}
			B_3 &= \frac{1}{2} \int_{0}^{1} {\left|{g(t)^{-1}}\cdot {f(t, \varphi_t)} \right|^2 } - {\left| {g(t)^{-1}}\cdot {f(t, Y_t) } \right|^2}  \,{\rm d}t
			\\ &\leq \frac{1}{2} \int_{0}^{1} { {\left| {g(t)^{-1}}\cdot \left(f(t, \varphi_t) - f(t, Y_t) \right) \right|^2 + 2 \left| {g(t)^{-1}}\cdot \left( f(t, \varphi_t) - f(t, Y_t) \right) \right| \left|{g(t)^{-1}}\cdot f(t, Y_t) \right|}} \,{\rm d}t
			\\ &\leq \frac{1}{2} \int_{0}^{1} { {\left| {g(t)^{-1}}\cdot \left(f(t, \varphi_t) - f(t, Y_t) \right) \right|^2 \,{\rm d}t +  \int_{0}^{1} \left| {g(t)^{-1}}\cdot \left( f(t, \varphi_t) - f(t, Y_t) \right) \right| \left|{g(t)^{-1}}\cdot f(t, Y_t) \right|}} \,{\rm d}t.
		\end{aligned}
	\end{displaymath}
	Using that $f$ is Lipschitz continuous for the second variable, we have
	\begin{displaymath}
		\vert f(t, Y_t) - f(t, \varphi_t) \vert = \vert f(t, \varphi_t + A_t) - f(t, \varphi_t) \Vert \leq L \Vert A_t \Vert.
	\end{displaymath}
	When $\Vert A\Vert \leq \frac{\epsilon}{C_{\frac{A}{W}}}$, we obtain
	\begin{equation}\label{16}
		\vert f(t, Y_t) - f(t, \varphi_t) \vert  \leq \frac{L }{C_{\frac{A}{W}}} \epsilon.
	\end{equation}
	Inequality $\eqref{16}$ and the boundedness of $f$ and $g^{-1}$ imply that
	\begin{equation}\label{17}
		\limsup\limits_{\epsilon \to 0} \mathbb{E}\left({\rm exp}\left\{ cB_3 \right\} \big|\Vert A \Vert < \epsilon \right) = 1
	\end{equation}
	for all $c \in \mathbb{R}$.
	
	For the fourth term $B_4$, applying inequality $\eqref{16}$ and the boundedness of $\dot{\varphi}_t$ and $g^{-1}$, we have
	\begin{equation}\label{18}
		\limsup\limits_{\epsilon \to 0} \mathbb{E}\left({\rm exp}\left\{ cB_4 \right\} \big|\Vert A \Vert_{\alpha} < \epsilon \right) = 1
	\end{equation}
	for all $c \in \mathbb{R}$.
	
	For the first term $B_1$, applying Taylor expansion, we have
	\begin{displaymath}
		\begin{aligned}
			f(t ,Y_t) &= f(t ,\varphi_t + A_t) = f(t ,\varphi_t) + \nabla f(t, x) |_{\varphi_t} A_t + R_t
			\\&= f(t ,\varphi_t) + \nabla f(t, \varphi_t) A_t + R_t,
		\end{aligned}
	\end{displaymath}
	where $\nabla$ is gradient operator. For $\Vert A\Vert \leq \epsilon$, we have
	\begin{displaymath}
		\sup\limits_{t \in [0, 1]} \Vert R_t \Vert \leq k \epsilon^2.
	\end{displaymath}
	Hence, $B_1$ can be written
	\begin{displaymath}
		\begin{aligned}
			B_1 &= \int_{0}^{1} \big\langle {{g(t)^{-1}}\cdot{f(t, Y_t)}}, \,{\rm d}W_t \big\rangle = \int_{0}^{1} \big\langle {{g(t)^{-1}}\cdot{\left(f(t ,\varphi_t) + \nabla f(t, \varphi_t)\cdot A_t + R_t \right)}}, \,{\rm d}W_t \big\rangle
			\\ &:= B_{11} + B_{12} + B_{13}.
		\end{aligned}
	\end{displaymath}
	The term $B_{11}$ has the same expression as $B_2$
	\begin{displaymath}
		\begin{aligned}
			B_{11} = \int_{0}^{1} \big\langle {{g(t)^{-1}}\cdot{f(t ,\varphi_t)}}, \,{\rm d}W_t \big\rangle.
		\end{aligned}
	\end{displaymath}
	Due to $f \in C^2_b \left(\left[0, 1 \right] \times \mathbb{R}^n, \mathbb{R}^n \right)$, we can show that ${g(t)^{-1}}\cdot{f(t ,\varphi_t)}  \in L^2([0,1], \mathbb{R}^n)$. Using the same method as item $B_2$ yields
	\begin{equation}\label{19}
		\limsup\limits_{\epsilon \to 0} \mathbb{E}\left({\rm exp}\left\{ cB_{11} \right\} \big|\Vert A \Vert < \epsilon \right) = 1
	\end{equation}
	for all $c \in \mathbb{R}$.
	In order to apply Theorem $\ref{theorem 2.5}$, we will express the term $B_{12}$ as a double stochastic integral with respect to $W$. We have
	\begin{displaymath}
		\begin{aligned}
			B_{12} &= \int_{0}^{1} \langle{{g(t)^{-1}}\cdot{ \nabla f(t ,\varphi_t)\cdot A_t }}, \,{\rm d}W_t \rangle
			= \int_{0}^{1} {\int_{0}^{t} {{g(t)^{-1}}\cdot{ \nabla f(t ,\varphi_t)\cdot g(s) }}  \,{\rm d}W_s} \otimes \,{\rm d}W_t
			\\ &= \int_{0}^{1} {\int_{0}^{1} {{g(t)^{-1}}\cdot{ \nabla f(t ,\varphi_t)\cdot g(s) 1_{s \leq t} }} \,{\rm d}W_s} \otimes \,{\rm d}W_t.
		\end{aligned}
	\end{displaymath}
	Define
	\begin{displaymath}
		F(s,t) := {g(t)^{-1}}\cdot{ \nabla f(t ,\varphi_t)\cdot g(s) 1_{s \leq t} }.
	\end{displaymath}
	Hence, $B_{12} = \int_{0}^{1} \int_{0}^{1} {\tilde{F}} \,{\rm d}W_s \otimes \,{\rm d}W_t$, where $\tilde{F} := \frac{1}{2} (F + F^{*})$ is the symmetrization of $F$. According to conditions $(C2)$ and $(C3)$, the operator $K(\tilde{F})$ is nuclear, and its trace can be computed as follows:
	\begin{displaymath}
		Tr.\tilde{F} = Tr.{F} = \frac{1}{2} \int_{0}^{1} {{F}(t, t)} \,{\rm d}t = \frac{1}{2}  \int_{0}^{1} Tr.({g(t)^{-1}}\cdot{\nabla f(t ,\varphi_t)}\cdot g(t)) \,{\rm d}t.
	\end{displaymath}
	By Theorem $\ref{theorem 2.5}$ and Lemma $\ref{lemma 2.8}$, we have
	\begin{equation}\label{20}
		\limsup\limits_{\epsilon \to 0} \mathbb{E}\left({\rm exp}\left\{ cB_{12} \right\} \big|\Vert A \Vert < \epsilon \right) = {\rm exp} \left( - \frac{1}{2} \int_{0}^{1} Tr.\left({g(t)^{-1}}\cdot{\nabla f(t ,\varphi_t)}\cdot g(t) \right) \,{\rm d}t \right)
	\end{equation}
	for all $c \in \mathbb{R}$.
	Finally, we study the behaviour of the term $B_{13}$. For any $c \in \mathbb{R}$ and $\delta > 0$, we have
	\begin{equation}\label{21}
		\begin{aligned}
			\mathbb{E} \left( {\rm exp} \left\{ cB_{13} \right\} \big| \Vert A \Vert \leq \epsilon \right) 
			&= \int_{0}^{\infty} {e^x \mathbb{P}\left( \left| c\int_{0}^{1} \langle {g(t)^{-1}} \cdot {R_t}, \,{\rm d}W_t \rangle \right| > x \big| \Vert A \Vert \leq \epsilon \right)} \,{\rm d}x
			\\ &\leq \int_{\delta}^{\infty} {e^x \mathbb{P}\left( \left| c\int_{0}^{1} \langle {g(t)^{-1}} \cdot {R_t}, \,{\rm d}W_t \rangle \right| > x \big| \Vert A \Vert \leq \epsilon \right)} \,{\rm d}x 
			\\ &\quad + e^{\delta} \mathbb{P}\left( \left| c\int_{0}^{1} \langle {g(t)^{-1}} \cdot {R_t}, \,{\rm d}W_t \rangle \right| > \delta \big| \Vert A \Vert \leq \epsilon \right).
		\end{aligned}
	\end{equation}
	Define the martingale $M_t = c\int_{0}^{t} \langle {g(t)^{-1}} \cdot {R_s}, \,{\rm d}W_s \rangle$. We have estimate about its quadratic variation
	\begin{displaymath}
		\langle M_t \rangle = c^2\int_{0}^{t} {\Vert {g(t)^{-1}} \cdot R_s \Vert^2} \,{\rm d}s \leq C \epsilon^4
	\end{displaymath}
	for some $C > 0$. Using the exponential inequality for martingales, we obtain
	\begin{displaymath}
		\mathbb{P}\left( \left| c\int_{0}^{1} \langle {g(t)^{-1}} \cdot {R_t}, \,{\rm d}W_t \rangle \right| > x, \Vert A \Vert \leq \epsilon \right) \leq {\rm exp}\left\{ -\frac{x^2}{2c\epsilon^4} \right\}.
	\end{displaymath}
	Then, by Lemma $\ref{lemma 2.7}$, we have
	\begin{displaymath}
		\mathbb{P}\left( \left| c\int_{0}^{1} \langle {g(t)^{-1}} \cdot {R_t}, \,{\rm d}W_t \rangle \right| > x \big| \Vert A \Vert \leq \epsilon \right) \leq {\rm exp}\left\{ -\frac{x^2}{2c\epsilon^4} \right\}
		{\rm exp}\left\{ c \left( \frac{\epsilon}{M} \right)^{-\frac{2}{1 - 2\alpha}} \right\}.
	\end{displaymath}
	Applying the latter estimate and taking limits in $\eqref{21}$, if $0 < \alpha < \frac{1}{4}$ yields
	\begin{equation}\label{22}
		\limsup\limits_{\epsilon \to 0} \mathbb{E}\left({\rm exp}\left\{ cB_{13} \right\} \big| \Vert A \Vert \leq \epsilon \right) = 1
	\end{equation}
	for all $c \in \mathbb{R}$, as $\epsilon \to 0$ and $\delta \to 0$.
	
	Finally, by Lemma $\ref{lemma 2.3}$ and inequalities $\eqref{3}$, $\eqref{15}$, $\eqref{17}$ - $\eqref{20}$ and $\eqref{22}$, we have
	\begin{displaymath}
		\begin{aligned}
			\lim\limits_{\epsilon \to 0} \frac{\mathbb{P}\left(\Vert X_t -\varphi_t \Vert \leq \epsilon\right)}{\mathbb{P}\left(\Vert W\Vert \leq \epsilon\right)} = C_{\frac{A}{W}} {\rm exp} \left\{  -\frac{1}{2} \int_{0}^{1} \left| 	{g(t)}^{-1} \cdot  \left( {\dot{\varphi}_t - f(t, \varphi_t)} \right) \right|^2 \,{\rm d}t - \frac{1}{2} \int_{0}^{1} {{\rm div}^{g}_{x} f(t ,\varphi_t)} \,{\rm d}t \right\}.	
		\end{aligned}\notag
	\end{displaymath}
	Consequently,
	\begin{displaymath}
		OM(\varphi, \dot{\varphi}) =  \int_{0}^{1} \left| 	{g(t)}^{-1} \cdot  \left( {\dot{\varphi}_t - f(t, \varphi_t)} \right) \right|^2 \,{\rm d}t + \int_{0}^{1} {{\rm div}^{g}_{x} f(t ,\varphi_t)} \,{\rm d}t.
	\end{displaymath}
	\end{proof}

\section{Numerical experiments}
In this section, 
drawing upon the foundational insights from Theorem $\ref{theorem 4.3}$ and Theorem $\ref{theorem 3.1}$ of this paper, we illustrate our methodology through examples and numerical simulations for a one-dimensional SDE and a fast-slow SDE system. Specifically, we first calculate the respective equations' Onsager-Machlup functionals. Then, by minimizing the OM functionals, we determine the most probable pathway between two metastable states. Finally, computer simulations are employed to generate visual representations, validating the accuracy of our theoretical findings.
\begin{example}\label{example 5.1}
	Consider the following $1$-dimensional SDE:
	\begin{equation}\label{5.1}
		\begin{aligned}
			\mathrm{d} x(t) = t(4x(t) - (x(t))^3)\mathrm{d} t + (t + 1)\mathrm{d}W_t,
		\end{aligned}
	\end{equation}
where $x \in \mathbb{R}$, $t \in [0,1]$. We define $f(t,x) = t(4x(t) - (x(t))^3)$ and $g(t) = t + 1$. It is demonstrable that the functions $f$ and $g$ meet the criteria stipulated by Theorem $\ref{theorem 3.1}$. Given $f$'s properties, the system's metastable states are identified at $x = -2$ and $x = 2$, allowing us to calculate the most probable path between these states.
\end{example}
By Theorem $\ref{theorem 3.1}$ we can obtain the Onsager-Machlup functional for $\eqref{5.1}$:
\begin{align*}
	OM(y, \dot{y}) = \int_{0}^{1} {\left( \frac{\dot{y}_t - t(4y(t) - (y(t))^3)}{t + 1} \right)^2 } \,{\rm d}t + \int_{0}^{1} {t(4 - 3(y(t))^2)} \,{\rm d}t.
\end{align*}
Then we can find the most probable path $y$ for $x$ by minimizing the corresponding Onsager-Machlup functional $OM(y, \dot{y})$ with the help of variational principle. Firsy, we define the Lagrangian $L(t, y, \dot{y})$:
\begin{align*}
	L(t, y, \dot{y}) = {\left( \frac{\dot{y}_t - t(4y(t) - (y(t))^3)}{t + 1} \right)^2 } + {t(4 - 3(y(t))^2)}.
\end{align*}
Then, we can obtain the Euler Lagrange equation,
\begin{align}\label{5.2}
	\frac{\partial L}{\partial y} - \frac{\partial}{\partial t}\left(\frac{\partial L}{\partial \dot{y}}\right)= 0,
\end{align}
where
\begin{align}
	\frac{\partial L}{\partial y} &= \frac{-2t(\dot{y} - t(4y - y^3)) * (4 - 3y^2))}{(t + 1)^2} - 6ty, \label{5.3}\\
	\frac{\partial}{\partial t}\left(\frac{\partial L}{\partial \dot{y}}\right) &= \frac{2}{(t + 1)^3} \left(\left( \ddot{y} -\left(4y - y^3\right) -t \left(4\dot{y} -3y^2\dot{y}\right) \right) \left(1+ t\right) - 2 \left(\dot{y} -t\left( 4y - y^3\right)\right)\right).\label{5.4}
\end{align}
By bringing $\eqref{5.3}$ and $\eqref{5.4}$ into $\eqref{5.2}$, we can obtain
\begin{align}\label{5.5}
	\ddot{y} = (4y - y^3) + t(4\dot{y} - 3y^2\dot{y}) + \frac{(2(\dot{y} - t(4y - y^3)))}{1 + t} - t(\dot{y} - t(4y - y^3))(4 - 3y^2) - 3ty(1 + t)^2.
\end{align}
Under conditions $x(0)=-2$ and $x(1)=2$, according to equation $\eqref{5.5}$, we obtain the most probable path of system $\eqref{5.1}$ between two different metastable states through numerical simulation (see $Fig.\ref{F.1}$).

\begin{example}
	The significance of stochastic volatility in the domains of option pricing and asset risk management is well-established. Jean-Pierre Fouque et al. \cite{21} have introduced a spectrum of multiscale stochastic volatility models characterized by dual diffusions operating on distinct temporal scales: one exhibiting rapid fluctuations and the other manifesting more gradual changes. This approach has been demonstrated to enhance model fitting accuracy substantially. In this framework, the price of the underlying asset, denoted by $X(t)$, is represented as the solution to a SDE:
	\begin{align*}
		\mathrm{d}X(t) = AX(t)\mathrm{d}t +  B(Y(t), Z(t))X(t)\mathrm{d}W_t,
	\end{align*}
	where the volatility process $B(Y(t), Z(t))$ is driven by two diffusion processes fast scale volatility factor $Y(t)$ and slow scale volatility factor $Z(t)$:
	\begin{align*}
		\mathrm{d}Y(t) &= \frac{1}{\epsilon} F_1(t, Y(t))\mathrm{d}t + \frac{1}{\sqrt{\epsilon}} G_1(t)\mathrm{d}W^1_t,\\
		\mathrm{d}Z(t) &= \delta AF_2(t, Z(t))\mathrm{d}t + \sqrt{\delta} G_2(t)\mathrm{d}W^2_t,
	\end{align*}
	where $\frac{1}{\epsilon}$ signifies the parameter governing the fast scale, while $\delta$ denotes the parameter for the slow scale. It can be demonstrated that the Onsager-Machlup functional effectively identifies the most probable path for multiscale factor random volatility. Moreover, our analysis extends to more intricate scenarios where factors influencing fast scale fluctuations are interlinked with those affecting slow scale fluctuations. To illustrate this, we present an example involving a $2$-dimensional stochastic differential equation:
	\begin{equation}\label{5.6}
		\begin{aligned}
			\mathrm{d} \left( \begin{matrix}
				x_1(t)\\
				x_2(t)
			\end{matrix}\right) 
		= 0.04 \cdot\left( \begin{matrix}
			a^2x_1(t)(8 - x_1(t)x_2(t) - (x_1(t))^2)\\
			b^2x_2(t)(8 - x_1(t)x_2(t) - (x_2(t))^2)
		\end{matrix}\right)\cdot \mathrm{d} t 
		+ 0.4\left( \begin{matrix}
			a(1 &+ t) \quad &0 \\
			&0 \quad b(2 &+ t)
		\end{matrix}\right)\cdot
		\left( \begin{matrix}
			\mathrm{d}W^1_t \\
			\mathrm{d}W^2_t
		\end{matrix}\right),
		\end{aligned}
	\end{equation}
	where $x(t) = \left( \begin{matrix}
		x_1(t)\\
		x_2(t)
	\end{matrix}\right) \in \mathbb{R}^2$, $t \in [0,1]$ and $a^2$ is fast scale, $b^2$ is slow scale. We set $g(t) = 0.4\left( \begin{matrix}
	a(1 &+ t) \quad &0 \\
	&0 \quad b(2 &+ t)
\end{matrix}\right)$, $f(t,x) = 0.04 \cdot\left( \begin{matrix}
a^2 x_1(t)(8 - x_1(t)x_2(t) - (x_1(t))^2)\\
b^2 x_2(t)(8 - x_1(t)x_2(t) - (x_2(t))^2)
\end{matrix}\right) \mathrm{d} t$, and $W^1_t$ and $W^2_t$ are independent $1$-dimensional Brownian motions. It is demonstrable that the functions $f$ and $g$ meet the criteria stipulated by Theorem $\ref{theorem 4.3}$. Given $f$'s properties, the system's metastable states are identified at $(-2,-2)$ and $(2,2)$, allowing us to calculate the most probable path between these states.
\end{example}
By Theorem $\ref{theorem 4.3}$ we can obtain the Onsager-Machlup functional for $\eqref{5.6}$:
	\begin{align*}
		OM(y, \dot{y}) &= \int_{0}^{1} \left| {g(t)}^{-1} \cdot \left({\dot{\varphi}_t - f(t, \varphi_t)} \right)\right|^2 \,{\rm d}t + \int_{0}^{1} {{\rm div}^{g}_{x} f(t ,\varphi_t)} \,{\rm d}t\\
		&= 6.25\left.\left.\left\|\begin{bmatrix}a(1+t)&0\\0&b(2+t)\end{bmatrix}\right.^{-1}\cdot\left(\begin{bmatrix}\dot{y}_1\\\dot{y}_2\end{bmatrix}\right.-\begin{bmatrix}0.04(-a^2 y_1(y_1^2+y_1 y_2-8))\\0.04(-b^2 y_2(y_2^2+y_1 y_2-8))\end{bmatrix}\right)\right\|^2 \\
		&\quad - 0.04 \cdot \left.\mathrm{trace}\left(\begin{bmatrix}a(1+t)&0\\0&b(2+t)\end{bmatrix}\right.^{-1}\cdot
		\left(\begin{bmatrix}a^2(3y_1^2+2y_1 y_2-8)&a^2 y_1^2\\b^2 y_2^2&b^2(3y_2^2+2y_1 y_2-8)\end{bmatrix}\right) \cdot\begin{bmatrix}a(1+t)&0\\0&b(2+t)\end{bmatrix}\right)\\
		&= \frac{6.25}{{(a(t + 1))}^2} \left( \frac{dy_1}{dt} + 0.04 a^2 y_1 (y_1^2 + y_1 y_2 - 8) \right)^2 +
		\frac{6.25}{{(b(t + 2))}^2}  \left( \frac{dy_2}{dt} + 0.04 b^2 y_2 (y_2^2 + y_1 y_2 - 8) \right)^2 \\
		&\quad - 0.04 (3a^2 y_1^2 + 3b^2 y_2^2 + 2(a^2 +b^2) y_1 y_2 - 8(a^2 +b^2)).
	\end{align*}
	Using the same method as Example $\ref{example 5.1}$, we can obtain the most probable path of system $\eqref{5.6}$ between two different metastable states (see $Fig.\ref{F.2}$). We define $P =\frac{a^2}{b^2}$. For different scales of $a$ and $b$, we can obtain the optimal paths with different $P$-values, which can be referred to $Fig.\ref{F.3}$ for details.

\begin{figure}[htbp]
	\centering
	\includegraphics[width=16.66cm]{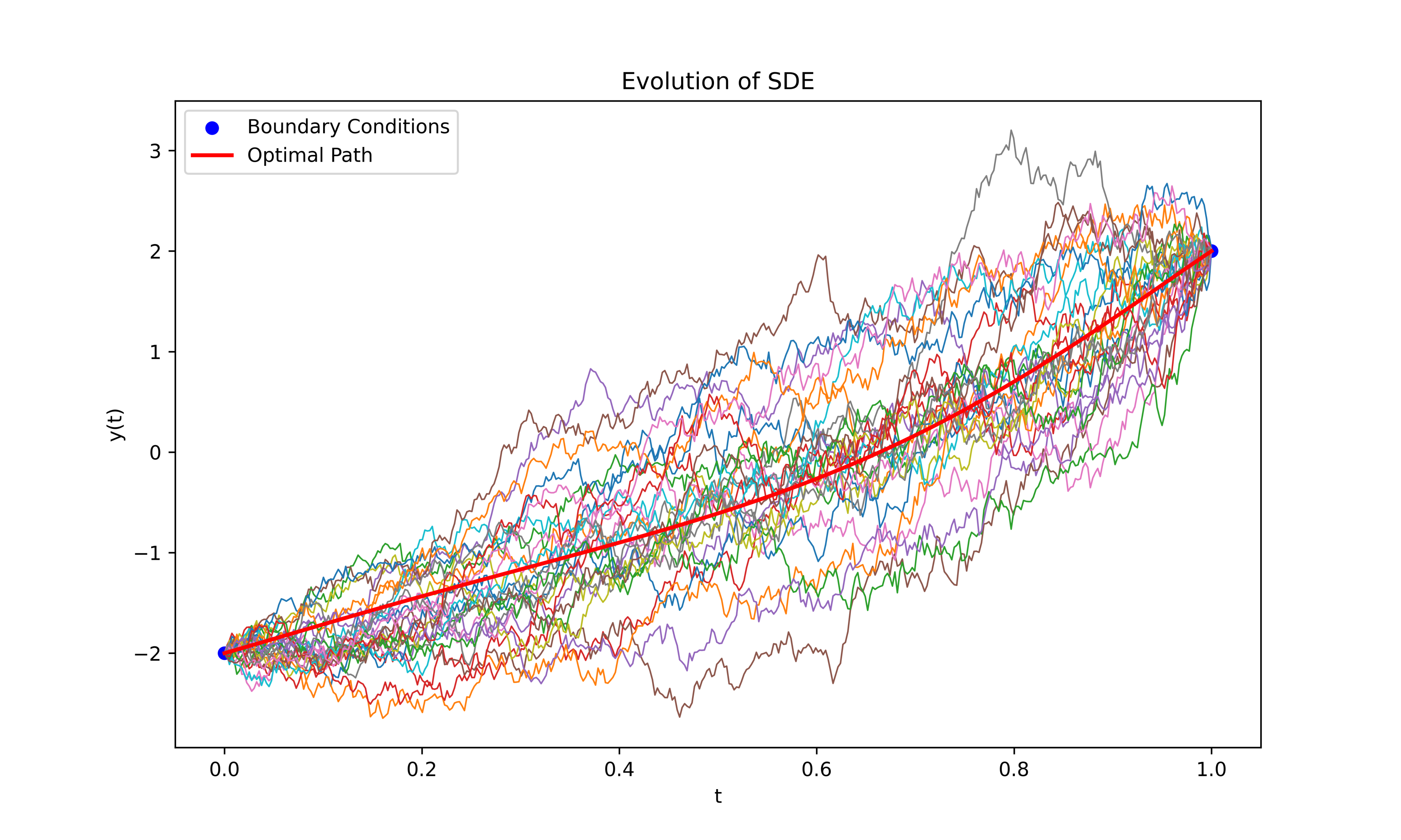}\\
	\includegraphics[width=16.66cm]{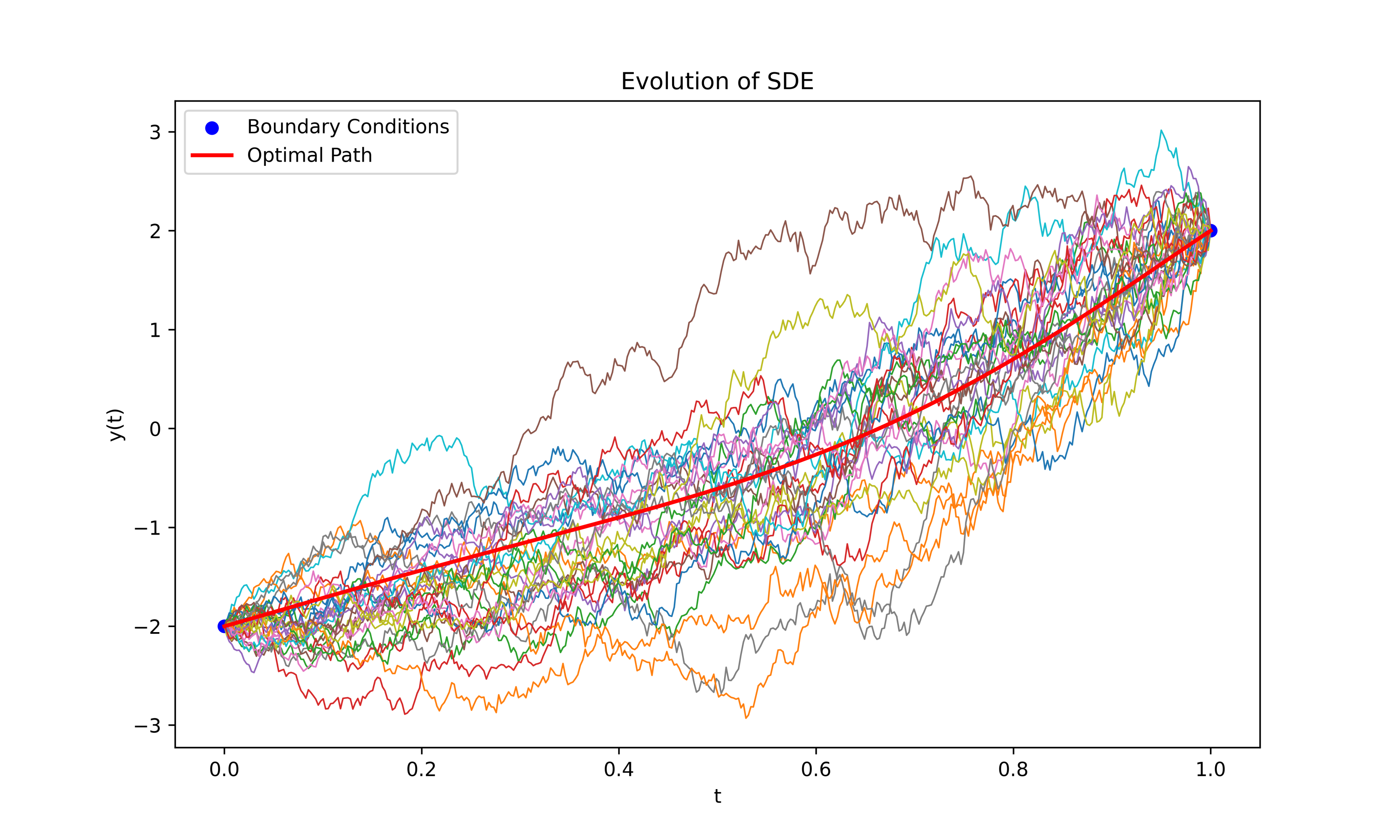}
	\caption{The stochastic process $x(t)$ (thin line) satisfies equation $\eqref{5.1}$ from metastable state $x=-2$ to metastable state $x=2$ and the most probable path $y(t)$ (thick red line) is generated by equation $\eqref{5.5}$. Due to its stochastics, we demonstrated the situation of two simulations.}
	\label{F.1}
\end{figure} 
\begin{figure}[htbp]
	\centering
	\includegraphics[width=12cm]{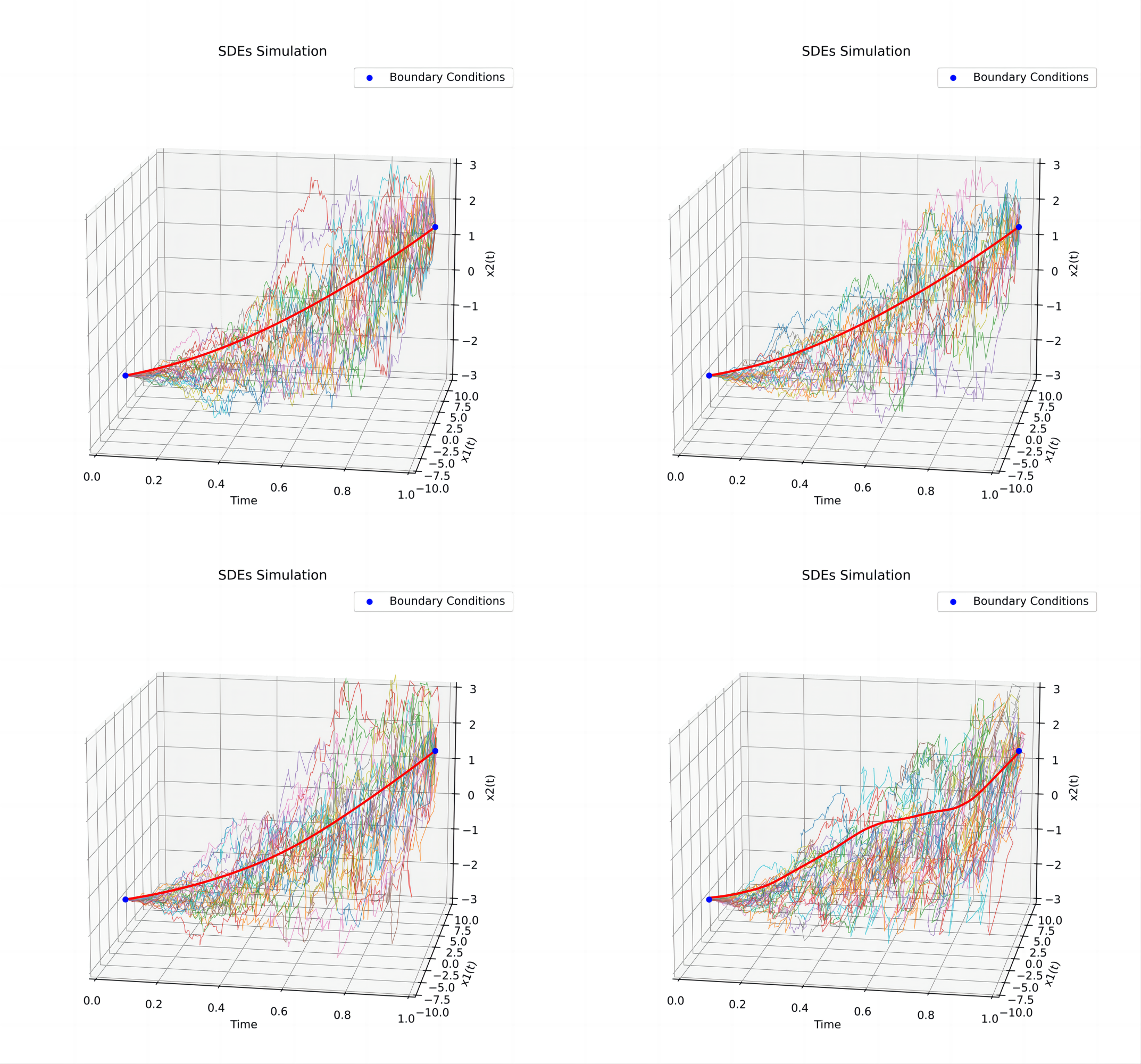}
	\caption{The stochastic process $x(t)$ (thin line) satisfies equation $\eqref{5.6}$ from metastable state $x=(-2,-2)$ to metastable state $x=(2,2)$ and the most probable path $y(t)$ (thick red line) is generated by minimizing the Onsager-Machlup functional.Here, parameters $b=1, a=1, 5, 10, 30$ correspond to the figures in the top left, top right, bottom left, and bottom right, respectively.}
	\label{F.2}
\end{figure} 
\begin{figure}[htbp]
	\centering
	\includegraphics[width=12cm]{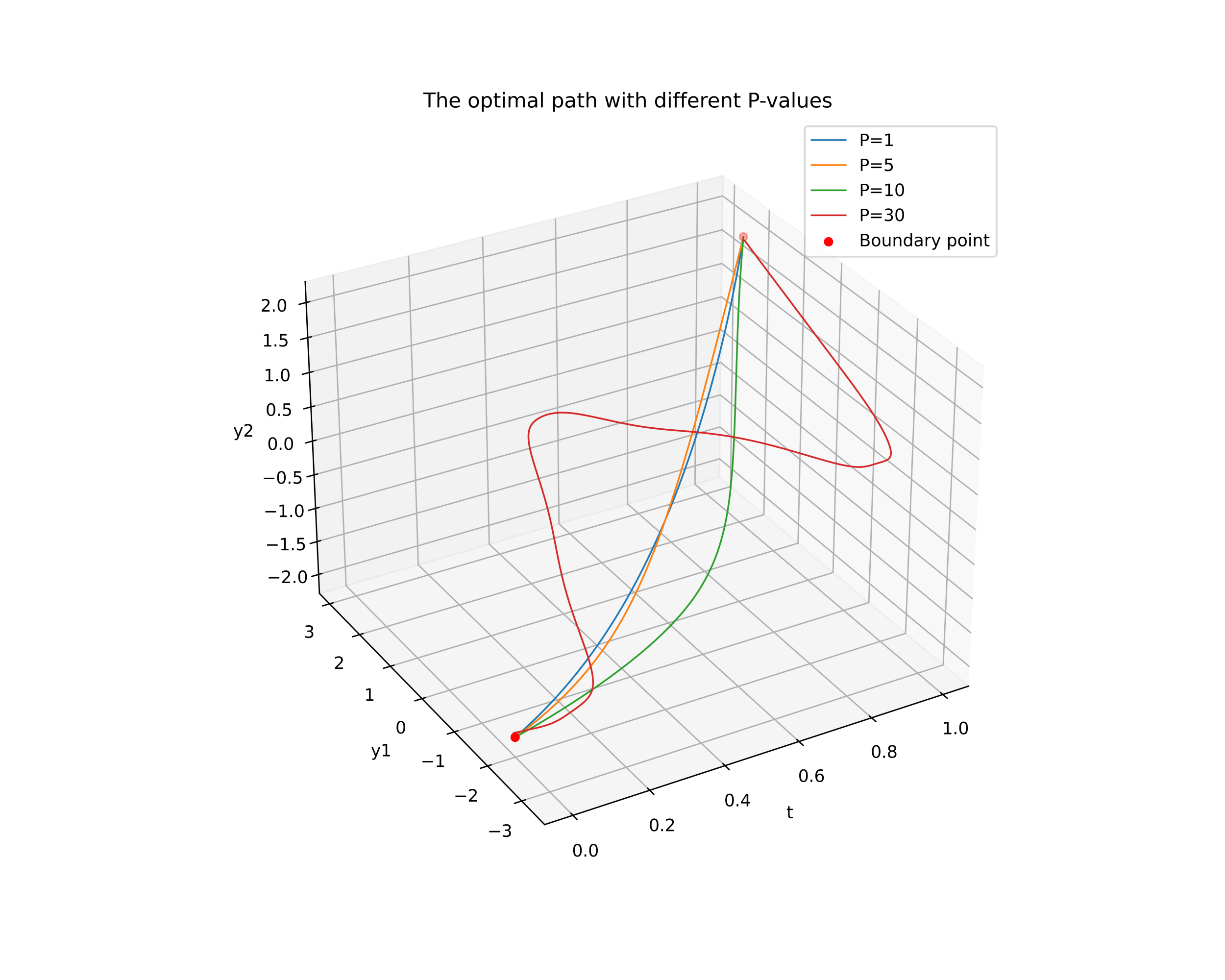}
	\caption{Comparison figure of different the most probable paths.}
	\label{F.3}
\end{figure}


\newpage


\newpage
\bibliographystyle{plain}
\bibliography{Ref}

\end{document}